\newtheorem{claim}{Claim}
\theoremstyle{definition}
\theoremstyle{remark}
\newcommand{\cov}{\text{cov}}
\newcommand{\var}{\text{var}}
\newcommand{\R}{{\mathbb R}}
\newcommand{\vvec}[1]{{\bf{#1}}}
\begin{document}

\preprint{APS/123-QED}

\title{High-dimensional dynamics in low-dimensional networks}

\author{Yue Wan}
\author{Robert Rosenbaum}
 \affiliation{Department of Applied and Computational Mathematics and Statistics, University of Notre Dame, Notre Dame, IN 46556, USA.}

 \email{Robert.Rosenbaum@nd.edu}

\date{\today}

\begin{abstract}
Many networks in nature and  applications have an approximate low-rank structure in the sense that their connectivity structure is dominated by a few dimensions. It is natural to expect that dynamics on such networks would also be low-dimensional. Indeed,  theoretical results show that low-rank networks produce low-dimensional dynamics whenever the network is isolated from external perturbations or input. However, networks in nature are rarely isolated. Here, we study the  dimensionality of dynamics in recurrent networks with low-dimensional structure driven by high-dimensional inputs or perturbations. We find that dynamics in such networks can be high- or low-dimensional and we derive mathematical conditions on the network structure under which linearized dynamics are high-dimensional. In many low-rank networks, dynamics are suppressed in directions aligned with the network's low-rank structure, a phenomenon we term ``low-rank suppression.'' We show that several low-rank network structures arising in nature satisfy the conditions for generating high-dimensional dynamics and low-rank suppression. Our results clarify important, but counterintuitive relationships between a recurrent network's connectivity structure and the structure of {its response to external input.}
\end{abstract}

\maketitle


\section{Introduction}

Recent work shows that many networks arising in nature and  applications have low-rank structure~\cite{thibeault2024low}. 
This observation raises an important question: 
What is the relationship between the dimensionality of a network's \textit{structure} and the dimensionality of the \textit{dynamics} on the network?

In Neuroscience, for example, the implications of low-rank recurrent connectivity on neural dynamics is a topic of intense research~\cite{aljadeff2016low,mastrogiuseppe2018linking,landau2021macroscopic,dubreuil2022role,landau2018coherent,schuessler2020dynamics,valente2022extracting,cimevsa2023geometry,beiran2023parametric,mastrogiuseppe2025stochastic,aljadeff2016low,clark2025connectivity}. 
Theoretical studies demonstrating low-dimensional dynamics in low-rank neuronal network models are consistent with some neural recordings showing low-dimensional neural activity~\cite{kaufman2014cortical,gallego2017neural,stringer2019spontaneous}.  
However, many of these recordings are made in the context of low-dimensional stimuli or behavior. A growing number of more recent studies have shown that neural activity can be high-dimensional, particularly in response to high-dimensional stimuli or behavior~\cite{stringer2019high,stringer2019spontaneous,lanore2021cerebellar,avitan2022not,markanday2023multidimensional,manley2024simultaneous}.  These observations raise the important point that the dimensionality of dynamics on a network depends on the structure of the network's external input in addition to its internal structure.

{There is a large body of theoretical work on  the dimensionality of dynamics on low-rank networks~\cite{mante2013context,sussillo2013opening,aljadeff2016low,landau2018coherent,mastrogiuseppe2018linking,schuessler2020dynamics,dubreuil2022role,valente2022extracting,cimevsa2023geometry,beiran2023parametric,thibeault2024low,landau2018coherent,huang2019circuit,landau2021macroscopic,o2022direct,beiran2023parametric,cimevsa2023geometry,schuessler2024aligned,mastrogiuseppe2025stochastic,clark2025connectivity,aljadeff2016low}.}
Some theoretical and computational studies demonstrating low-dimensional dynamics in low-rank networks focus on networks that are self-contained in the sense that they operate in the absence of noise or external inputs or perturbations~\cite{aljadeff2016low,landau2018coherent,thibeault2024low}. Other work considers external input, but assumes that the input is perfectly aligned with the network's low-dimensional structure~\cite{landau2021macroscopic}.  
In reality, networks in nature are rarely isolated from internal or external perturbations or inputs, and these perturbations are not necessarily aligned with the network's low-dimensional structure. 
Some~\cite{mastrogiuseppe2018linking,dubreuil2022role} considers the case of general external input to networks that are weakly low-rank in the sense that the largest singular values are $\mathcal O(1)$ in magnitude, consistent with connection weights that scale like $1/N$ where network size, $N$, is assumed to be large.

In this study, we consider networks with strongly low-rank structure in the sense that they have a small number of asymptotically large singular values, while the rest are small or moderate in magnitude. This is consistent with connection weights that are asymptotically larger than $1/N$ in magnitude, often taken to be $\mathcal O(1/\sqrt N)$ in theoretical work. 
Importantly, we study the dimensionality of the dynamics on these networks when they are driven by external input. The dimensionality of activity driven by intrinsic dynamics is a fundamentally different question treated in previous work~\cite{thibeault2024low,landau2018coherent,aljadeff2016low}. We also focus on dynamics near linearly stable equilibrium. We first show by example that  networks under these conditions can produce high- or low-dimensional dynamics in response to high-dimensional inputs, depending on the network's structure. Perhaps counterintuitively, we also find that networks under these conditions \textit{suppress} inputs aligned to their dominant low-dimensional structure, an effect we call ``low-rank suppression.''

We use mathematical analysis to derive  conditions under which low-rank suppression and high-dimensional dynamics can occur in the linearized dynamics of strongly low-rank networks close to a stable equilibrium. Our analysis shows that low-rank suppression arises generally in these networks. High-dimensional responses to high-dimensional external inputs, on the other hand, arise when the dominant low-rank part of the network's connectivity matrix has a non-vanishing component with the EP property. EP matrices are matrices for which the column space and row space are equal (the abbreviation stands for ``equal principal'' or ``equal projectors'')~\cite{schwerdtfeger1950introduction,bernstein2018scalar}. 
Low-dimensional dynamics can arise in the linearized dynamics of strongly low-rank networks through a form of non-normal amplification~\cite{trefethen1991pseudospectra,murphy2009balanced,hennequin2012non}, but our mathematical analysis shows that this form of non-normal amplification can only occur when the network is non-EP. Non-normality is not sufficient. To our knowledge, this connection between the EP property of matrices, non-normal amplification, and the dimensionality of recurrent network dynamics has not been described in previous work.



We show that many common structural features such as biased weights, modularity, and spatial connectivity structure naturally produce strongly low-rank connectivity matrices with low-rank parts satisfying the EP condition, implying that these networks produce low-rank suppression and high-dimensional responses to high-dimensional inputs. We also draw connections between low-rank suppression and the mathematical theory of  balanced networks~\cite{van1996chaos}, extending previous work in this direction~\cite{landau2021macroscopic}. 
Finally, we demonstrate our conclusions in dynamics on a real  epidemiological network.

Our conclusions have important implications for the interpretation of low-dimensional network structure. In neuroscience, our results can 
explain why neural populations generate high-dimensional responses to high-dimensional stimuli and tasks~\cite{stringer2019high}. Our results also generalize and extend the theory of excitatory-inhibitory balance~\cite{van1996chaos,landau2021macroscopic}, and amplification arising from breaks in this balance~\cite{landau2016impact,pyle2016highly,ebsch2018imbalanced}. 
Beyond neuroscience, our results imply that perturbations misaligned to low-rank network structure are most effective at driving responses. This counterintuitive observation could be used to design more effective interventions to epidemiological, biological, social, and other networks.

\section{High-dimensional dynamics and low-rank suppression in a network with rank-one structure}

 \begin{figure*}
 \centering{
 \includegraphics[width=6.5in]{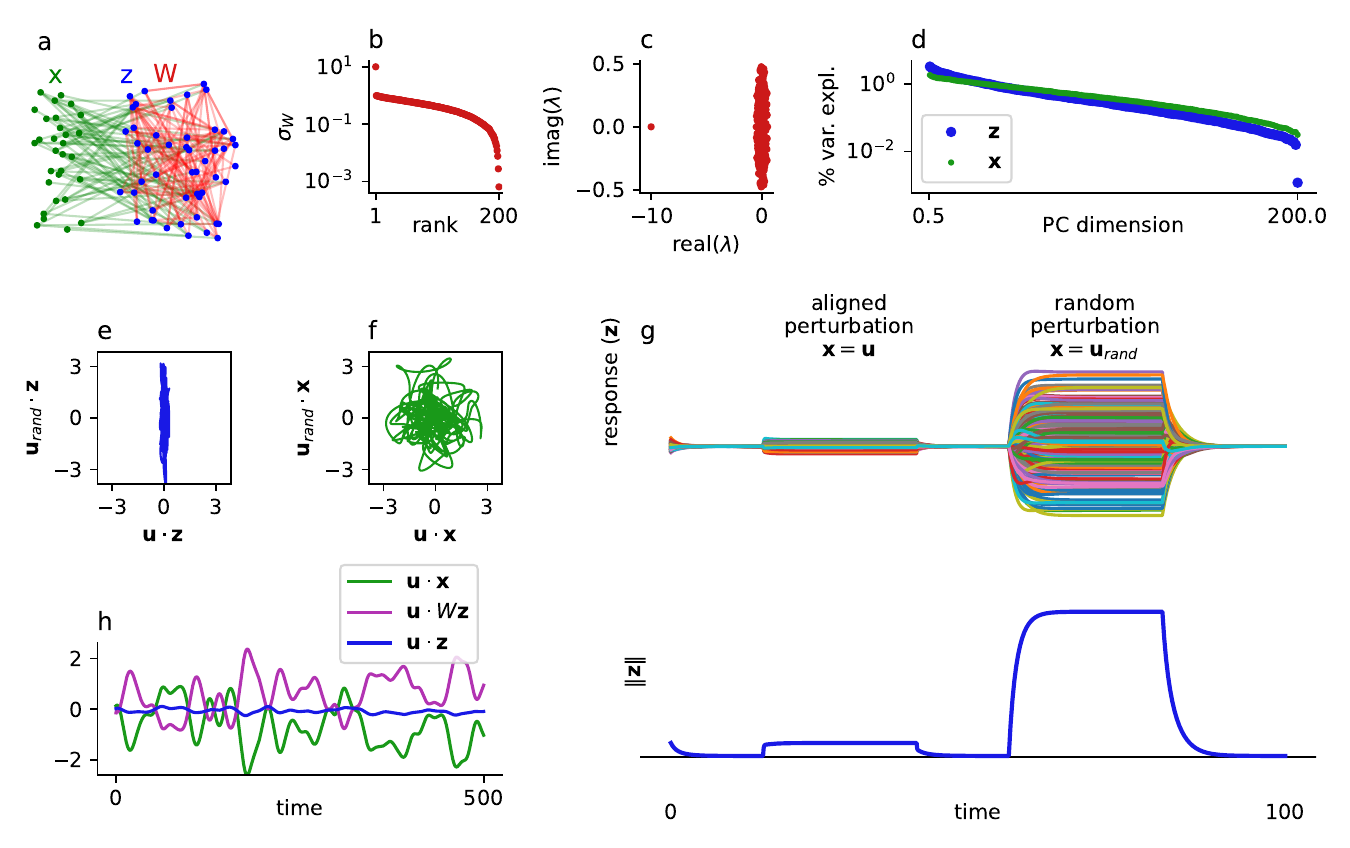}
 }
 \caption{{\bf Response properties of a recurrent network with rank-one structure.}  {\bf a)} Schematic of model: The connectivity matrix, $W$, quantifies connections between nodes, $\vvec z$, which receive external perturbations or input, $\vvec x$.   {\bf b)} The singular values of  $W$ have one dominant term, indicating approximate rank-one structure. {\bf c)} The eigenvalues of $W$ have a dominant, negative term. 
 {\bf d)} The distribution of variance across principal components of a Gaussian stochastic input ($\vvec x(t)$; green) and the response ($\vvec z(t)$; blue). {\bf e,f)} The network response (e) and input (f) projected onto the plane determined by $\vvec u$ and a random vector, $\vvec u_\textrm{rand}$, demonstrates low-rank suppression along $\vvec u$. 
 {\bf g)} The network response ($\vvec z(t)$, top) and its norm ($\|\vvec z(t)\|$, bottom) given an input aligned to the low-rank structure of the network ($\vvec x_\textrm{aligned}$) and a random input ($\vvec x_\textrm{rand}$). {The norm of $\vvec z$ is more than 11 times smaller in response to the aligned versus random perturbation.} 
 {\bf h)} Local network input (purple) cancels with external input (green) to produce suppressed network responses (blue) in the direction of $\vvec u$.
 }
 \label{F1}
 \end{figure*}

For illustrative purposes, we begin with a simple, linear recurrent network model (Figure~\ref{F1}a) 
\begin{equation}\label{Edzdt1}
\begin{aligned}
\tau \frac{d\vvec z}{dt}&=-\vvec z+W\vvec z+\vvec x.
 \end{aligned}
\end{equation}
where $\vvec x(t)$ is an external perturbation or input, $\vvec z(t)$ is the network response, and $\tau=1$. The $N\times N$ recurrent connectivity matrix, $W$, takes the form 
\begin{equation*}
W= W_0+W_1
\end{equation*}
where 
\[
W_0=c\vvec u \vvec u^T
\]
is a normal, rank-one matrix
and
\begin{equation*}
W_1=\frac{\rho}{\sqrt N}Z
\end{equation*}
is a full rank random matrix. 
Specifically, $\vvec u$ is a random vector with $\|\vvec u\|=1$, $Z$ is an $N\times N$ matrix with entries drawn i.i.d.~from a standard normal distribution, and $\rho>0$. If 
\[
|c|\gg \rho
\]
then $W$ is ``effectively low-rank''~\cite{thibeault2024low} in the sense that it has one large singular value near $|c|$ and the remaining singular values are much smaller (approximately bounded by $2\rho$; Figure~\ref{F1}b). 
Similarly, $W$ has one large eigenvalue near $c$ and the remaining eigenvalues lie approximately within a circle of radius $\rho$ in the complex plane (Figure~\ref{F1}c)~\cite{girko1985circular,tao2010random}.

Importantly, stability of the network dynamics requires that $\rho<1$ and $c<1$~\cite{sompolinsky1988chaos}. If the dynamics were unstable, then $\|\vvec z(t)\|$ would grow exponentially toward $\infty$, so we only consider parameter regimes with stable dynamics. 
Since strong low-rank structure also requires $|c|\gg\rho$, stability requires $c<0$ when $\rho=\mathcal O(1)$. In simulations here, we take $\rho=0.5$ and $c=-10$. We consider more general models and network structures later.

We first provided a high-dimensional input, $\vvec x(t)$. Specifically, for our first simulation, each $\vvec x_j(t)$  was an i.i.d., smooth, stationary Gaussian process, which  models internal noise, external perturbations, or external input to the network. Conventional wisdom might lead us to expect a low-dimensional network response dominated by variability in the direction of $\vvec u$. Indeed, a \textit{feedforward} network with the same connectivity matrix and same input produces approximately one-dimensional dynamics because it amplifies inputs aligned to $\vvec u$ (Figure~\ref{SuppFigFfwd} in Appendix~\ref{AppSuppFigures}). 
However, this conclusion does not necessarily carry over to  recurrent networks. 

In the recurrent network, the variance explained by the principal components of $\vvec z(t)$ decayed similarly to those of $\vvec x(t)$ (Figure~\ref{F1}d), indicating that $\vvec z(t)$ was high-dimensional like $\vvec x(t)$.  The only exception was the principal component that explained the \emph{least} variance in $\vvec z(t)$, which was much weaker than the other principal components (last blue dot in Figure~\ref{F1}d). 
Perhaps surprisingly, this weakest principal component direction was closely aligned to $\vvec u$ (the angle was less than $8^\circ$). Consistent with this finding, the variance of $\vvec z(t)$ in the direction of $\vvec u$ was more than 132 times \textit{smaller} than the variance of $\vvec z(t)$ along a random direction (Figure~\ref{F1}e) even though  the variability of $\vvec x(t)$ was similar in each direction (Figure~\ref{F1}f).

To better understand these results, we next simulated the same network with two different external input patterns. One pattern was  aligned with the low-dimensional structure of $W$,
\[
\vvec x_\textrm{aligned}= \vvec u,
\]
while the other had a random direction,  
\[
\vvec x_\textrm{random}=\vvec u_\textrm{rand},
\]
which was generated identically to, but independently from $\vvec u$. 
Intuitively, we might expect the network to respond more strongly to the aligned input than to the random input, as in a feedforward network (Figure~\ref{SuppFigFfwd} in  in Appendix~\ref{AppSuppFigures}). In reality, we observed exactly the opposite (Figure~\ref{F1}g): The response to $\vvec x_\textrm{random}$ was more than 11 times {larger} than the response to $\vvec x_\textrm{aligned}$.

We use the term ``low-rank suppression'' to refer to this phenomenon in which inputs aligned to the low-rank structure of a network are suppressed by the network's dynamics. In the absence of other directions that are amplified by the network, low-rank suppression leads to high-dimensional responses to high-dimensional inputs (as in Figure~\ref{F1}d).

The results in Figure~\ref{F1} contrast with some previous modeling work demonstrating low-dimensional dynamics in low-rank recurrent networks~\cite{mastrogiuseppe2018linking,landau2018coherent,landau2021macroscopic,dubreuil2022role,thibeault2024low}. In Discussion and in Appendix~\ref{SuppComparison}, we provide a more detailed review of this previous work and its relation to ours.

On closer inspection, low-rank suppression in this example is not surprising because the Jacobian at the fixed point has a large negative eigenvalue (near $c-1$ with $c<0$ and $|c|\gg 0$) with associated eigenvector near $\vvec u$ so the dynamics in Eq.~\eqref{Edzdt1} are highly compressive along $\vvec u$. However, as we will show in the next section, low-rank suppression occurs even in the absence of a large, negative eigenvalue.

A second explanation for low-rank suppression generalizes to examples without a  large, negative eigenvalue: Consider the steady-state-solution to Eq.~\eqref{Edzdt1}, 
\begin{equation}\label{EzSS}
\vvec z=[I-W]^{-1}\vvec x.
\end{equation}
Convergence to the steady-state requires that $\vvec x(t)=\vvec x$ is static, but as long as $\vvec x(t)$ varies more slowly than $\tau=1$, solutions approximately track the quasi-steady state given by Eq.~\eqref{EzSS}. 
Because of the matrix inverse in Eq.~\eqref{EzSS}, the \textit{large} singular value of $W$ near $|c|$ produces a \textit{small} singular value of $[I-W]^{-1}$ near $1/(1-|c|)\approx 1/|c|$ with left and right singular vectors near $\vvec u$. Hence, the recurrent network suppresses inputs in the direction of $\vvec u$. 

Both of the explanations above rely on the assumption that $W_0$ is a symmetric matrix, or at least a normal matrix, but in the next section, we will next consider a case in which $W_0$ is non-normal and non-symmetric.

An interesting consequence of low-rank suppression is that external perturbations cancel nearly perfectly with  recurrent inputs in the direction of $\vvec u$. More precisely, note that $W\vvec z(t)$ in Eq.~\eqref{Edzdt1} can be interpreted as a vector of internal input to each node, whereas $\vvec x(t)$ is external input and $\vvec z(t)$ is the network response. 
In the quasi-steady state, we have
\begin{equation}\label{EzFP}
\vvec z\approx W\vvec z+\vvec x.
\end{equation}
In other words, $\vvec z$ tracks the sum of internal and external inputs in the quasi-steady state.

Since $W$ has a large singular value with left and right singular values aligned to $\vvec u$, multiplication by $W$ amplifies the direction $\vvec u$. Therefore, the internal input, $W\vvec z$, is large in the direction of $\vvec u$ whenever $\vvec z$ is moderate in the direction of $\vvec u$. In other words, $|\vvec u\cdot W\vvec z|\gg |\vvec u \cdot \vvec z|$. This fact might appear to present a paradox because the direction $\vvec u$ is \textit{amplified} in the product $W\vvec z$, but $W\vvec z$ is one component of $\vvec z$ (Eq.~\eqref{EzFP}) and the direction $\vvec u$ is \textit{suppressed} in $\vvec z$, as we saw in Figure~\ref{F1}e,g.



This apparent paradox is resolved by a cancellation between $\vvec W\vvec z$ and $\vvec x$ in the direction of $\vvec z$. Specifically, 
under low-rank suppression, internal input cancels nearly perfectly with external input in the direction of $\vvec u$ (\textit{i.e.}, $\vvec u\cdot W\vvec z\approx -\vvec u\cdot \vvec x$) so that the response, $\vvec z$, is weak in the direction of $\vvec u$ (Figure~\ref{F1}h). We refer to this effect as ``low-rank cancellation,'' which is closely related to the theory of excitatory-inhibitory balance in neural circuits~\cite{van1996chaos,van1998chaotic} as we will show later.

\section{Low-dimensional dynamics and low-rank suppression in a non-normal network with rank-one structure}

 \begin{figure*}
 \centering{
 \includegraphics[width=6.5in]{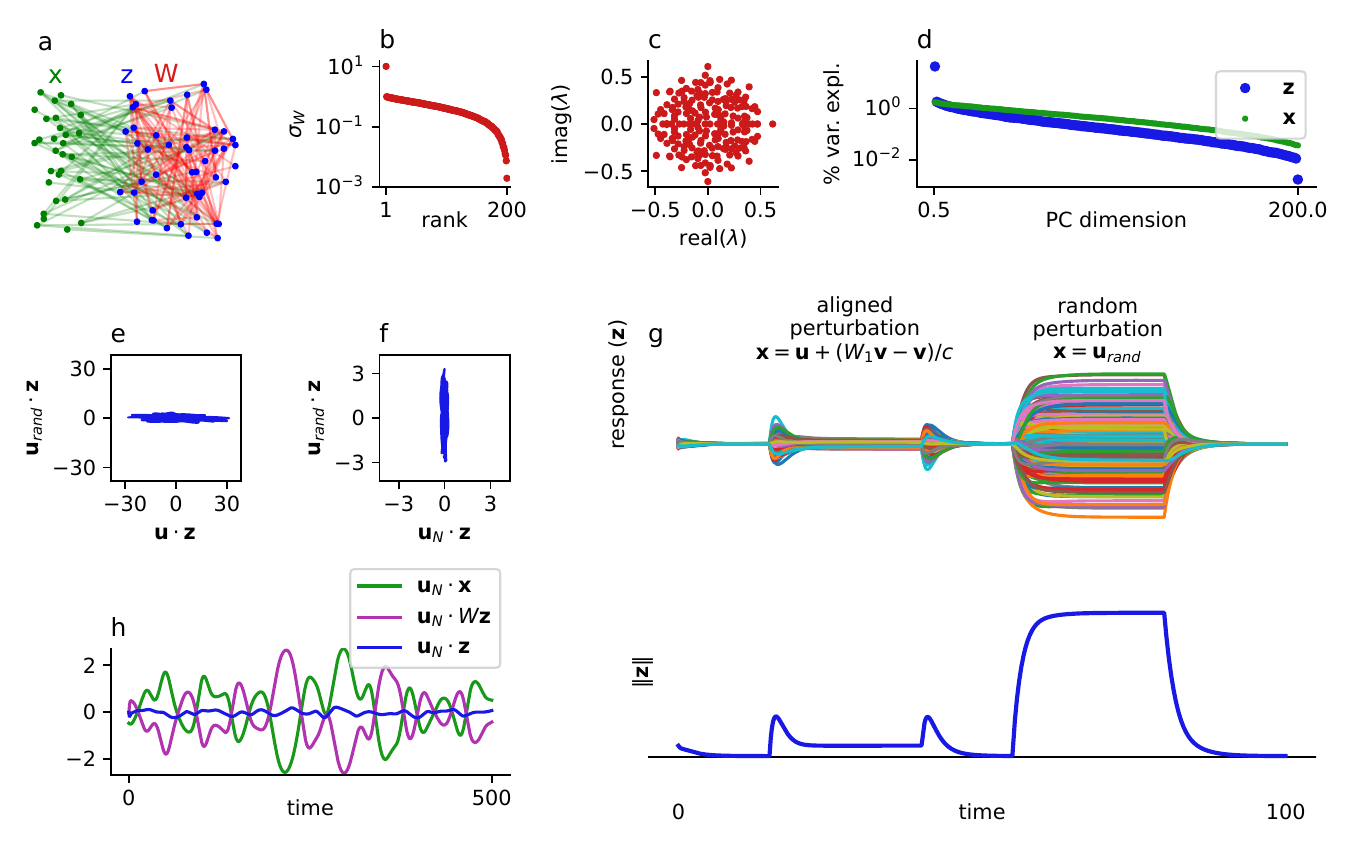}
 }
 \caption{{\bf Response properties of a non-normal recurrent network with rank-one structure.}  Same as Figure~\ref{F1} except the low-rank part of $W$ is non-normal. The network still exhibits low-rank suppression and cancellation (f, g, h) even in the absence of strong negative eigenvalues (c). The network exhibits low-dimensional dynamics (d), in contrast to Figure~\ref{F1}. {The norm of $\vvec z$ is more than 13 times smaller in response to the aligned versus random perturbation.} 
 }
 \label{F2}
 \end{figure*}

Our observation of low-rank suppression in the example above might seem unsurprising due to the presence of a large, negative eigenvalue. It is widely known that an eigenvalue with large, negative real part produces a suppressive dynamic. 
Likewise, our observation of high-dimensional dynamics might seem like a simple consequence of high-dimensional input. 
We next address these points with an example in which $W_0$ is highly non-normal (however, we later show that the EP property, not normality, is key to determining the dimensionality of dynamics). In this example, low-rank suppression arises in the absence of a large negative eigenvalue and the network's response is low-dimensional even though the input is high-dimensional.

We specifically consider an example in which the model and all parameters are the same as in Figure~\ref{F1} except that 
\[
W_0=c\vvec u\vvec v^T
\]
where $\vvec u$ and $\vvec v$ are random, orthogonal vectors with $\|\vvec u\|=\|\vvec v\|=1$. 
This matrix is highly non-normal in the sense that its left  and right singular vectors are orthogonal, and its non-zero eigenvalue does not have a simple relationship to its singular values. 

The distribution of singular values of $W=W_0+W_1$ are identical to those from our previous example (Figure~\ref{F2}b; compare to Figure~\ref{F1}b), but the eigenvalues are different (Figure~\ref{F2}c; compare to Figure~\ref{F1}c). Specifically, the eigenvalues no longer have a dominant negative outlier. They are all $\mathcal O(1)$ in magnitude and clustered in a single bulk.

Repeating the simulations from above for this network, we found that the response to high-dimensional input was low-dimensional in the sense that variability was much greater in one direction than all others: Over 40\% of the variance in $\vvec z(t)$ was captured by a single principle component (first blue dot in Figure~\ref{F2}d), compared to less than 2\% for any orthogonal direction (the remaining blue dots). This dominant direction was partially aligned to $\vvec u$ (the angle between the dominant direction and $\vvec u$ was $31^\circ$). Consistent with this observation, variability was $100$ times larger in the direction of $\vvec u$ than in a random direction (Figure~\ref{F2}e). Hence, low-dimensional dynamics were produced by high-dimensional input.


In addition, there was a direction in which variability was especially weak (last blue dot in Figure~\ref{F2}d), similar to our previous example. This last principal component direction, $\vvec u_N$, was partially aligned to $\vvec v$ (the angle between $\vvec u_N$ and $\vvec v$ was $13^\circ$). While the variability along $\vvec v$ itself was not much smaller than a random direction (due to imperfect alignment between $\vvec u_N$ and $\vvec v$), the variability along $\vvec u_N$ was 190 times smaller than the variability in a random direction (Figure~\ref{F2}f). 
Also, low-rank cancellation was observed in the direction of $\vvec u_N$ (Figure~\ref{F2}h). 

Low-rank suppression was observed in the sense that an input in the direction of $\vvec x_\textrm{aligned}=\vvec u+(W_1\vvec v-\vvec v)/c$ produced a much weaker response than input in a random direction (Figure~\ref{F2}g). The choice of $\vvec x_\textrm{aligned}$ will be discussed in Section~\ref{SLDD} and Appendix~\ref{SStochasticAnalysis}, but note that $\vvec x_\textrm{aligned}$ is closely aligned to $\vvec u$ whenever $c$ is large. Hence, low-rank suppression and low-rank cancellation arise even in the absence of a dominant negative eigenvalue.

To summarize our observations from the two examples above (Figures~\ref{F1} and \ref{F2}), both networks produced low-rank suppression and low-rank cancellation, but only the network with a non-normal connectivity matrix (Figure~\ref{F2}) produced low-dimensional dynamics. In the next section, we derive more precise conditions for low-rank suppression and low-dimensional dynamics.




\section{General conditions for low-rank suppression and high-dimensional dynamics}

Above, we considered a simple network with rank-one structure. Here, we consider more general classes of networks, specify a list of assumptions that we make about the networks, and  precise conditions for low-rank suppression and low-dimensional dynamics under these assumptions. {We first state two assumptions that we make throughout this section. Then, we show that low-rank suppression occurs under  these assumptions alone. Finally, we analyze the conditions for high-dimensional dynamics under various assumptions on $\vvec x(t)$. 
}

\subsection{Assumption 1: Dynamics are linear or near a stable equilibrium.}\label{SLin}

We consider a general class of dynamics of the form
\begin{equation}\label{EzDS}
\tau \frac{d\vvec z}{dt}=-\vvec z+\vvec F(\vvec z,\vvec x)
\end{equation}
for some smooth vector field, $\vvec F:\R^{N}\times \R^N\to\R^N$, where $\tau>0$ sets the timescale of the dynamics. 
The dependence of $\vvec F(\vvec z,\boldsymbol x)$ on $\vvec z$ defines interactions  between nodes, but it is difficult to define a fixed network of interactions, $W$, from the general dynamics in Eq.~\eqref{EzDS}. 
{In nonlinear systems, the effective connectivity between nodes can change with the state of the system. Moreover, nonlinear recurrent networks can produce any dynamics (high- or low-dimensional) in response to any input~\cite{maass2002real,jaeger2004harnessing}, so it is difficult to make any general statements about the dimensionality and structure of dynamics in recurrent networks far from a stable equilibrium. 
To resolve this difficulty, we assume that dynamics represent a small deviation away from a stable equilibrium so that we can linearize them.  
A change of coordinates under this linearization gives Eq.~\eqref{Edzdt1} where $W$ is defined as the Jacobian of $\vvec F$ at the fixed point ($W=\partial_{\vvec z}\vvec F(\vvec z_0,\vvec x_0)$; see Appendix~\ref{SuppLin} for details). 
Our assumption of stability guarantees that all eigenvalues of $W$ have real part less than 1. To avoid systems close to instability, we additionally assume that the eigenvalues are bounded away from 1, in other words $W$ does not have any eigenvalues close to $1$. For our examples, this implies that the spectral radius, $\rho$, of the random part, $W_1$, is not close to $1$.  
Eigenvalues near $1$ or eigenvalues larger than $1$ can produce low-dimensional dynamics through a different mechanism than the mechanisms that we are studying~\cite{sussillo2013opening,mante2013context,aljadeff2016low,huang2019circuit,thibeault2024low} (see Figures~\ref{SuppFigUnstable}--\ref{SuppFigCritical} in Appendix~\ref{AppSuppFigures} for examples).}

\subsection{Assumption 2: The connectivity matrix is strongly low rank in an asymptotic sense.}\label{SstronglyLR}

{We additionally assume that the network, $W$, has a strongly low-rank structure. To make this assumption precise, we consider networks in the limit of large $N$ and assume that a small number of singular values diverge with $N$ while the rest remain $\mathcal O(1)$ or smaller (see Appendix~\ref{SuppStrong} for a more precise statement).}

{This assumption has two components. First, that the connectivity matrix has a small number of dominant singular values. In later sections, we show that this property arises naturally in several natural network structures such as networks with biased weights, modular structure, or smooth spatial structure.}

{
Secondly, this assumption implies that the dominant singular values grow  large with $N$. 
This contrasts to other work on ``weakly'' low-rank networks in which the largest singular values are $\mathcal O(1)$~\cite{mastrogiuseppe2018linking,schuessler2020dynamics,dubreuil2022role,valente2022extracting,cimevsa2023geometry,beiran2023parametric}. Weakly low-rank structure arises naturally when mean weights scale like $\mathcal O(1/N)$ whereas strongly low-rank structure arises when mean weights are asymptotically larger than $1/N$, for example when they scale like $\mathcal O(1/\sqrt N)$ as in balanced networks~\cite{van1996chaos,renart2010asynchronous,rosenbaum2017spatial,baker2019correlated,landau2021macroscopic}. See Appendix~\ref{SuppComparison} for a more in-depth comparison between weakly and strongly low-rank networks.}

Strongly low-rank networks can be decomposed as~\cite{thibeault2024low}
\[
W=W_0+W_1
\]
where $W_0$ has rank $r\sim\mathcal O(1)$ and large singular values, while the singular values of $W_1$ are $\mathcal O(1)$ at most and $W_1$ can be full rank. We specifically assume that $W_1$ is a random matrix and independent from $W_0$. 
Since $W_0$ has rank $r$, we can write its singular value decomposition as
\[
W_0=U\Sigma V^T
\]
where $\Sigma$ is a diagonal, $r\times r$ matrix of singular values 
\[
\sigma_k=\Sigma_{kk}\gg 1
\] 
while $U=[\vvec u_1\cdots \vvec u_r]$ and $V=[\vvec v_1\cdots \vvec v_r]$ are $N\times r$ orthonormal matrices with columns $\vvec u_k$ and $\vvec v_k$ that define the left  and right singular vectors of $W_0$.



\subsection{Low-rank suppression occurs under Assumptions 1 and 2.}\label{SLRS}

Our first result is that Assumptions 1 and 2 are sufficient for low-rank suppression (as observed in Figures~\ref{F1}d and \ref{F2}d). 
Specifically, we claim that low-rank suppression is realized under a static input of the form
\begin{equation}\label{Exa}
\vvec x_a=\vvec u_k+\frac{W_1\vvec v_k-\vvec v_k}{\sigma_k}.
\end{equation}
where $k\le r$ so that $\sigma_k\gg 1$ is one of the dominant singular values. 
We claim that inputs of this form are strongly suppressed by the network. To see why this is true, first note that $\|\vvec x_a\|=1+o(1)$ since $\sigma_k\gg 1$, $\|\vvec u_k\|=\|\vvec v_k\|=1$, and the maximum singular value of  $W_1$ is $\mathcal O(1)$. 
Now note that the steady-state response to the input $\vvec x_a$ is given by 
\[
\vvec z_a=-\frac{\vvec v_k}{\sigma_k}.
\]
This can be checked by substituting the equations for $\vvec x_a$ and $\vvec z_a$ into the steady-state equation, $\vvec z=W\vvec z+\vvec x$. Since $\|\vvec v_k\|=1$ and $\sigma_k\gg 1$, we have that $\|\vvec z_a\|\ll 1$. Therefore, the network strongly suppresses the input $\vvec x_a$ from Eq.~\eqref{Exa} in the sense that $\|\vvec z_a\|\ll \|\vvec x_a\|$.

Note from Eq.~\eqref{Exa} that $\vvec x_a$ is closely aligned to $\vvec u_k$ since  $\sigma_k\gg 1$. Hence, the suppressed input, $\vvec x_a$, is nearly (but not exactly) parallel to the left singular vector, $\vvec u_k$. Also note that the response, $\vvec z_a$, is perfectly aligned to $\vvec v_k$. {Hence, strongly low-rank recurrent networks suppress inputs aligned near the dominant left singular vectors ($\vvec u_k$) and the suppressed response is aligned with the dominant right singular vectors ($\vvec v_k$).} 


Interestingly, these input-response directions along which suppression occurs are reversed from the input-response directions along which amplification occurs in a feedforward network with the same connectivity (for example, the network in Figure~\ref{SuppFigFfwd}  in Appendix~\ref{AppSuppFigures} for which $\vvec z=W\vvec x$ in the steady-state). More specifically, in a feedforward network with connectivity $W$, inputs aligned to the dominant \textit{right} singular vectors ($\vvec v_k$) produce \textit{amplified} responses in the direction of the corresponding \textit{left} singular vectors ($\vvec u_k$). This is a left right reversal and a suppression-amplification reversal of the behavior for recurrent networks, for which inputs aligned to $\vvec u_k$ produce suppressed responses in the direction of $\vvec v_k$.


In summary, recurrent networks with strongly low-rank structure satisfying Assumptions 1 and 2 admit a small number of input directions along which inputs are strongly suppressed. 
We call this form of suppression low-rank suppression. Low rank suppression occurs regardless of whether connectivity is normal, explaining why low-rank suppression was observed in both Figure~\ref{F1} and Figure~\ref{F2}. 
Later, we will explore consequences of low-rank suppression in common network structures. First, we discuss conditions for the  separate but related phenomenon of high-dimensional dynamics.


%


\subsection{Assumption 3: External inputs are stationary, high-dimensional, and slowly varying.}

{To begin discussing the dimensionality of $\vvec z(t)$, we need to make assumptions that allow us to define precisely what we mean by high- and low-dimensional dynamics.} To this end, we assume that $\vvec x(t)$ is a stationary ergodic stochastic process. In this case $\vvec z(t)$ is also a stationary ergodic process whenever dynamics are stable~\cite{gardiner1985handbook} as in Assumption~1. 
This allows us to quantify the dimensionality of $\vvec z(t)$ in terms of its principal components or, equivalently, by the singular values of its stationary covariance matrix. {Specifically, the variance explained by the first $k$ principal components represents the amount of variability constrained to a $k$-dimensional linear manifold. For this reason, we define $\vvec z(t)$ to have low-dimensional dynamics when there exists an $r\sim\mathcal O(1)$ such that $\sigma_j/\sigma_k\to 0$ as $N\to\infty$ whenever $k\le r$ and $j>r$. This implies that the vast majority of variability lies on a low-dimensional linear manifold. Otherwise, we say that $\vvec z(t)$ has high-dimensional dynamics. }

We  assume that $\vvec x(t)$ is high-dimensional. 
If $\vvec x(t)$ were low-dimensional, then low-dimensional dynamics in $\vvec z(t)$ could be inherited from $\vvec x(t)$ instead of being generated intrinsically  (see Figure~\ref{SuppFigLowDimInput}  in Appendix~\ref{AppSuppFigures} for an example). In assuming that $\vvec x(t)$ is high-dimensional, we are focusing on the question of whether low-dimensional dynamics arise from network interactions, not from low-dimensional input alone.

{For simplicity, within this section, we specifically assume that the components, $\vvec x_j(t)$, are i.i.d. This implies that $\vvec x(t)$ is high-dimensional, is a stronger assumption than necessary, but simplifies the exposition. In Appendix~\ref{SStochasticAnalysis}, we consider more general inputs that are not assumed to be i.i.d.}

For now, we also assume that $\vvec x(t)$ varies in time more slowly than the intrinsic timescale, $\tau$, of the network interactions. To make this assumption more precise, note that whenever $\vvec x(t)=\vvec x$ is constant, solutions, $\vvec z(t)$, converge to the steady-state given by $\vvec z=[I-W]^{-1}\vvec x$. We assume that $\vvec x(t)$ varies sufficiently slowly that $\vvec z(t)$ is approximated by the quasi-steady state approximation,
\begin{equation}\label{Equasi}
\vvec z(t)\approx [I-W]^{-1}\vvec x(t).
\end{equation}
In Appendix~\ref{SStochasticAnalysis}, we give more details about this assumption. {In Appendix~\ref{Sfast}, we extend our analysis and results to the case that $\vvec x(t)$ is not slowly varying.}

\subsection{Conditions for low-dimensional dynamics under Assumptions 1--3}\label{SLDD}

 \begin{figure*}
 \centering{
 \includegraphics[width=5in]{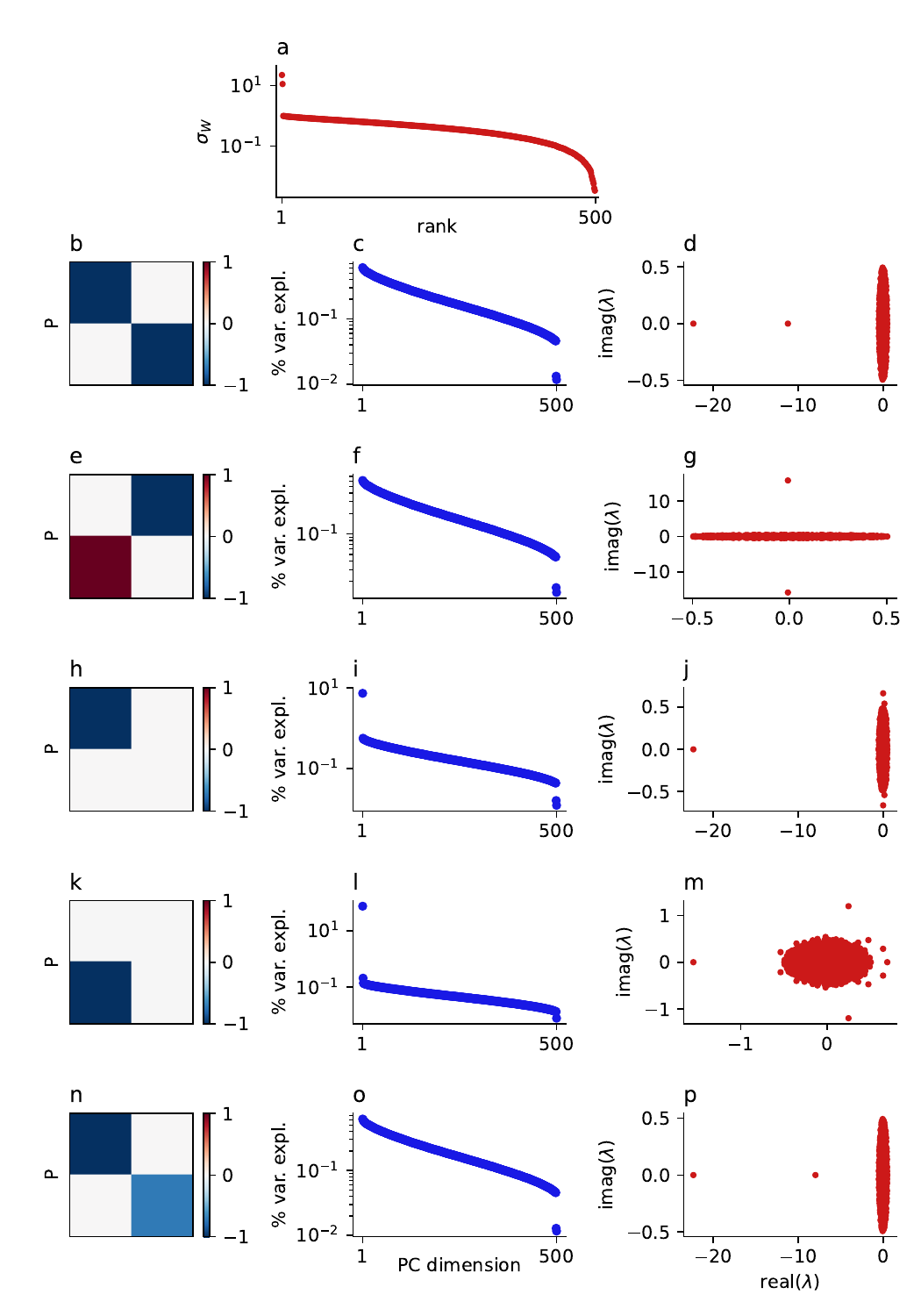}
 }
 \caption{{\bf Conditions for high-dimensional dynamics in a network with rank-two structure.} {\bf a)} Singular values of $W$ demonstrate an effective low-rank structure. {\bf b)} The  alignment matrix, $P$, when $W_0$ is normal and symmetric. {\bf c)} The variance explained by each principal component of the network dynamics, $\vvec z(t)$. {\bf d)} The eigenvalues of $W$. {\bf e--g)} Same as b--d except $W_0$ is EP, but non-normal.  {\bf h--j)} Same as b--d except $W_0$ is not EP. {\bf k--m)} Same as h--j but using a different non-EP structure. {\bf n--p)} Same as b--d except $W_0$ is non-EP with a non-vanishing EP component. See the text for the precise definition of $W_0$ in each case.}
 \label{F3}
 \end{figure*}

Under the quasi-steady state approximation in Eq.~\eqref{Equasi} and the assumption that elements of $\vvec x(t)$ are i.i.d., it is easy to show that the covariance matrix of $\vvec z(t)$ can be written as~\cite{gardiner1985handbook}
\begin{equation*}
\overline C^\vvec z \propto [I-W]^{-1}[I-W]^{-T}.
\end{equation*}
As a result, the question of whether $\vvec z(t)$ is low-dimensional becomes a question of whether $[I-W]^{-1}$ has a small number of dominant singular values whenever $W$ has a small number of dominant singular values. More details are given in Appendix~\ref{SStochasticAnalysis} where we prove that the answer to this question depends on the recurrent alignment matrix~\cite{landau2021macroscopic} of $W_0$, defined by
\[
P=V^TU.
\]
This $r\times r$ matrix measures the alignment between the left and right singular vectors of $W_0$. Specifically, $P_{jk}=\vvec v_j\cdot \vvec u_k$ measures the alignment between the $j$th right singular vector and the $k$th left singular vector of $W_0$. This matrix is also sometimes referred to as the ``overlap matrix''~\cite{clark2025connectivity}. 
Our main result from Appendix~\ref{SStochasticAnalysis} can be stated as follows:
\begin{claim}
 Consider the dynamics defined by Eq.~\eqref{EzDS} under Assumptions 1--3. If the dynamics of $\vvec z(t)$ are low-dimensional then $P=V^TU$ has at least one asymptotically small singular value. 
\end{claim}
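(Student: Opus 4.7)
The plan is to prove the contrapositive: assume every singular value of $P$ stays bounded below by some $\delta>0$ as $N\to\infty$, and show $[I-W]^{-1}$ has no asymptotically dominant singular value, so $\vvec z(t)$ is high-dimensional. Under Assumption~3 and the quasi-steady-state approximation~\eqref{Equasi}, the stationary covariance satisfies $\overline C^{\vvec z}\propto [I-W]^{-1}[I-W]^{-T}$, so the squared singular values of $[I-W]^{-1}$ constitute the variance spectrum of $\vvec z(t)$, and low-dimensionality amounts to $[I-W]^{-1}$ possessing only $\mathcal O(1)$ asymptotically dominant singular values with all others decaying in relative size.

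The central algebraic step is the Woodbury identity applied to $I-W=(I-W_1)-U\Sigma V^T$. With $M=V^T[I-W_1]^{-1}U$, an $r\times r$ matrix,
\begin{equation*}
[I-W]^{-1}=[I-W_1]^{-1}+[I-W_1]^{-1}U\bigl(\Sigma^{-1}-M\bigr)^{-1}V^T[I-W_1]^{-1}.
\end{equation*}
Assumption~1 together with $\|W_1\|$ bounded below $1$ yields $\|[I-W_1]^{-1}\|=\mathcal O(1)$, so the first summand has $\mathcal O(1)$ operator norm and all $N$ of its singular values lie in a fixed interval away from zero. Since $\sigma_k\gg 1$ drives $\Sigma^{-1}\to 0$, the size of the rank-$r$ correction is controlled asymptotically by $\|M^{-1}\|$.

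The crux is to transfer the conditioning of $P$ to $M$. Because $W_1$ is independent of the fixed orthonormal $U,V$ and its distribution is (approximately) rotationally invariant, a standard concentration argument for bilinear forms in random matrices gives $M=\alpha P+E$ with $\|E\|\to 0$, where $\alpha=\mathbb E\bigl[\tfrac{1}{N}\mathrm{tr}\,(I-W_1)^{-1}\bigr]$ is a nonzero $\mathcal O(1)$ scalar; for i.i.d.~Gaussian $W_1$ with spectral radius $\rho<1$ one obtains $\alpha=1$ by applying the mean-value property for the analytic function $1/(1-z)$ to the circular law on the disk of radius $\rho$. Well-conditioning of $P$ then implies well-conditioning of $M$, hence $\|(\Sigma^{-1}-M)^{-1}\|=\mathcal O(1)$, so the rank-$r$ Woodbury correction has bounded operator norm. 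By Weyl's inequality for singular values of a sum, all singular values of $[I-W]^{-1}$ lie in a bounded interval bounded away from zero, giving $\sigma_j/\sigma_k=\Theta(1)$ uniformly and therefore high-dimensional dynamics, which is the contrapositive.

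The main obstacle is the concentration step $M=\alpha P+o(1)$. For Gaussian $W_1$ it follows from $O(N)$-invariance of the distribution combined with Hanson--Wright-type bounds on bilinear forms; for the broader ensembles considered later, the cleanest route is asymptotic freeness between $W_1$ and the fixed rank-$r$ projectors built from $U,V$, which yields a deterministic limit for any resolvent-type trace $V^T g(W_1)U$. A secondary subtlety is ensuring that the Woodbury correction actually dominates the low-dimensional directions rather than cancelling against $[I-W_1]^{-1}$; this is controlled by the gap between $\sigma_k^{-1}$ and the smallest singular value of $M$, which we are maintaining by assumption.
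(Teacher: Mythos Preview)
Your argument is essentially correct and takes a genuinely different route from the paper. The paper argues the contrapositive in the opposite direction: it assumes $I-W$ has an asymptotically small singular value, picks a near-null unit vector $\vvec z$ with $(I-W)\vvec z=o(1)$, left-multiplies by $U^T$, argues heuristically that $U^TW_1\vvec z=o(1)$ and that $\vvec z$ lies mostly in $\mathrm{col}(U)$, and thereby extracts a near-null vector $\vvec y=U^T\vvec z$ of $P$ via $\Sigma P\vvec y=\vvec y+o(1)$. Your approach instead controls $[I-W]^{-1}$ directly through the Woodbury identity and the concentration $M=V^T(I-W_1)^{-1}U\to P$; this is more algebraic and buys you an explicit formula for the resolvent, from which one can in principle read off both the suppression directions and quantitative bounds, and which also suggests how to attack the converse the paper leaves open. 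The paper's route is more elementary (no isotropic-law-type input) but leans on a step---$U^TW_1\vvec z=o(1)$ despite $\vvec z$ depending on $W_1$---that is itself heuristic at the level of rigor presented.

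One point to tighten: your conclusion that \emph{all} singular values of $[I-W]^{-1}$ lie in an interval bounded away from zero is not correct, and plain Weyl does not give it. The rank-$r$ Woodbury correction does cancel against $[I-W_1]^{-1}$ in $r$ directions (this is exactly the low-rank suppression), so the bottom $r$ singular values of $[I-W]^{-1}$ are $\mathcal O(1/\sigma_k)\to 0$. What you actually need, and what does follow, is that the \emph{top} $N-r$ singular values are $\Theta(1)$: use the rank-$r$ interlacing form of Weyl, $\sigma_i([I-W]^{-1})\ge\sigma_{i+r}([I-W_1]^{-1})\ge 1/(1+\|W_1\|)$ for $i\le N-r$, together with your upper bound $\|[I-W]^{-1}\|=\mathcal O(1)$. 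That gives $\sigma_j/\sigma_k=\Theta(1)$ for all $j,k\le N-r$, which already rules out low-dimensionality with any $r'\sim\mathcal O(1)$. With that fix, your argument is complete.
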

Note that this result is equivalent to the following:
\setcounter{claim}{0}
\begin{claim}
 Consider the dynamics defined by Eq.~\eqref{EzDS} under Assumptions 1--3. If all singular values of $P=V^TU$ are $\mathcal O(1)$ then the dynamics of $\vvec z(t)$ are high-dimensional.
\end{claim}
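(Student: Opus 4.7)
The plan is to reduce the dimensionality statement to a spectral statement about $[I-W]^{-1}$ and then expose the rank-$r$ structure using the Woodbury identity. Under Assumption~3 the quasi-steady-state approximation yields $\vvec z(t)\approx[I-W]^{-1}\vvec x(t)$, so with i.i.d.\ components of $\vvec x(t)$ we have
\begin{equation*}
\overline C^{\vvec z}\propto[I-W]^{-1}[I-W]^{-T}.
\end{equation*}
The eigenvalues of $\overline C^{\vvec z}$ are the squared singular values of $[I-W]^{-1}$, so ``low-dimensional dynamics'' is equivalent to an $\mathcal O(1)$-sized subset of singular values of $[I-W]^{-1}$ asymptotically dominating the rest, in the sense defined in the text. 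My task is therefore to show that under the hypothesis on $P$ no such domination can occur.

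I would first isolate the rank-$r$ part of $W$ via Sherman--Morrison--Woodbury. Setting $M=[I-W_1]^{-1}$, the factorization $I-W=(I-W_1)(I-MU\Sigma V^T)$ gives
\begin{equation*}
[I-W]^{-1}=M+MU\bigl[\Sigma^{-1}-V^TMU\bigr]^{-1}V^TM.
\end{equation*}
Under Assumption~1 the spectral radius of $W_1$ stays bounded away from $1$, so $M$ has all singular values bounded above \emph{and} away from $0$ by $\mathcal O(1)$ constants. Under Assumption~2 we have $\|\Sigma^{-1}\|\to 0$, so the bracketed inverse converges to $-Q^{-1}$ with $Q=V^TMU$, giving the leading-order expression
\begin{equation*}
[I-W]^{-1}\approx M-MUQ^{-1}V^TM.
\end{equation*}

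The central analytic step is to translate the hypothesis, which is phrased in terms of $P=V^TU$, into a statement about $Q=V^TMU$. Expanding the Neumann series $M=I+W_1+W_1^2+\cdots$ and using that $W_1$ is independent of $U,V$, standard concentration for bilinear forms in a random matrix gives $Q=cP+E$ with $c=c(\rho)>0$ an $\mathcal O(1)$ scalar and $\|E\|=o(1)$ as $N\to\infty$. The hypothesis that every singular value of $P$ is bounded below by a positive constant then transfers to $Q$, so $\|Q^{-1}\|=\mathcal O(1)$ and therefore $\|MUQ^{-1}V^TM\|\le\|M\|^2\|Q^{-1}\|=\mathcal O(1)$. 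I expect this concentration step to be the main technical obstacle; a clean treatment likely requires either a free-probability computation of $V^TMU$ or a direct variance bound on each term $V^TW_1^kU$ together with geometric summability in $k$.

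To finish, observe that $MUQ^{-1}V^TM$ has rank at most $r$ and bounded operator norm. Weyl's inequality for singular values under a low-rank perturbation then implies $\sigma_{k+r}(M)\le\sigma_k([I-W]^{-1})\le\sigma_{k-r}(M)+\mathcal O(1)$ for all admissible $k$, so all but at most $r$ singular values of $[I-W]^{-1}$ are sandwiched between positive $\mathcal O(1)$ constants. In particular no $\mathcal O(1)$-sized subset of singular values can asymptotically dominate the remainder of the spectrum, and $\vvec z(t)$ is high-dimensional in the sense defined in the excerpt. This proves the claim.
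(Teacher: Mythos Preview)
Your argument is correct at the paper's level of rigor but takes a genuinely different route. The paper proves the contrapositive: assuming $I-W$ has an $o(1)$ singular value, it takes a unit vector $\vvec z$ with $(I-W)\vvec z=o(1)$, multiplies by $U^T$, argues that $\vvec z$ lies essentially in $\textrm{col}(U)$, and concludes $\Sigma P\vvec y=\vvec y+o(1)$ for $\vvec y=U^T\vvec z$, whence $\|P\vvec y\|=o(1)$. You instead work directly via Woodbury, exhibiting $[I-W]^{-1}$ as $M=(I-W_1)^{-1}$ plus a rank-$r$ correction of bounded norm, and finish with Weyl interlacing. Your approach is more constructive (it gives the leading-order form of $[I-W]^{-1}$, which is independently useful), while the paper's avoids Woodbury and needs no explicit control of $\|(I-W_1)^{-1}\|$.

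Two small remarks. First, your step ``spectral radius of $W_1$ bounded below $1$ $\Rightarrow$ singular values of $M$ bounded above'' is not automatic for non-normal matrices; you actually need $\sigma_{\min}(I-W_1)$ bounded away from $0$, which holds for the Ginibre-type $W_1$ here but should be stated as such. Second, your concentration step in fact gives $Q=P+o(1)$ (so $c=1$), since $E[W_1^k]$ contributes nothing to $V^TMU$ beyond the identity term; the $c(\rho)$ is harmless but unnecessary. The paper's proof has its own analogous heuristic (it uses $U^TW_1\vvec z=o(1)$ even though $\vvec z$ depends on $W_1$), so both arguments sit at the same level of rigor on this point.
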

Recall that all singular values of $P$ are bounded by $1$, so the condition that all singular values of $P$ are $\mathcal O(1)$ simply says that no singular values of $P$ are asymptotically small, hence the two versions of Claim 1 are logical contrapositives. {We were unable to prove the converse of these claims (that dynamics are low-dimensional whenever $P$ has a small singular value), even though it is consistent with all of our simulations. Proving the converse or finding a counterexample could be a fruitful direction for future research.}

{
Our proof of Claim~1  relied on an assumption that $\vvec x(t)$ varies more slowly than the timescale, $\tau$, of network interactions. In Appendix~\ref{Sfast}, we consider the case where $\vvec x(t)$ is not necessarily slow. In that case, the dimensionality can be broken down into the contribution from different frequency modes. When $\vvec x(t)$ varies at both high and low frequencies, the results in Claim~1 are still valid (see Figure~\ref{SuppFigFastx} for an example).}

To better understand the conditions on $P$ in Claim~1, we consider simulations of a widely used model from computational neuroscience~\cite{sompolinsky1988chaos,sussillo2009generating,mastrogiuseppe2018linking,rosenbaum2024modeling},
\begin{equation}\label{EdzdtNonLin}
\tau\frac{d\vvec z}{dt}=-\vvec z+W\tanh(\vvec z)+\vvec x
\end{equation}
with $\tau=1$. 
In these simulations, we take each $\vvec x_j(t)$ to be a smooth, i.i.d.~Gaussian process with correlation time $\tau_x=10$. 
We take $W_0$ to have rank $r=2$ so $W$ has two dominant singular values (Figure~\ref{F3}a) and we fix 
\[
U=\left[\begin{array}{cc}\vvec u_1 & \vvec u_2\end{array}\right]
\]
while exploring different choices of $V$.

We first consider a case in which $W_0$ is normal and symmetric by setting 
\[
\begin{aligned}
V&=\left[\begin{array}{cc}-\vvec u_1 & -\vvec u_2\end{array}\right].
\end{aligned}
\]
This is an extension of the example from Figure~\ref{F1} to rank $r=2$. In this case, $P$ is diagonal with $P_{kk}=-1$ (Figure~\ref{F3}b) so all singular values of $P$ are $\sigma_P=1$ and the network produces  high-dimensional dynamics (Figure~\ref{F3}c). Like the example from Figure~\ref{F1}, $W_0$ has eigenvalues with large, negative real part (Figure~\ref{F3}d). 
In this example, $W_0$ is both normal and symmetric. Results are similar when $W_0$ is normal and asymmetric (obtained, for example, by applying a rotation to $V$ in $r=2$ dimensions) except the conclusion $|P_{kk}|=1$ only holds when the non-zero eigenvalues of $W_0$ are real, which is necessarily the case when the $\sigma_k$ are distinct.

High-dimensional dynamics do not require that $W_0$ is normal. Instead, it is sufficient that $U$ and $V$ share a column space, $\textrm{col}(U)=\textrm{col}(V)$. Low-rank matrices, $W_0=U\Sigma V^T$, for which $U$ and $V$ share a column space are called EP matrices~\cite{schwerdtfeger1950introduction}. The term ``EP'' is sometimes said to refer to ``equal projectors'' 

All normal matrices are EP, but an EP matrix is not necessarily normal. If $W_0$ is an EP matrix, then $\sigma_P=1$ for all singular values of $P$, so the condition in Claim~1 is satisfied. 

An example of a non-normal, EP matrix, $W_0$, is given by taking
\[
\begin{aligned}
V&=\left[\begin{array}{cc}\vvec u_2 & -\vvec u_1\end{array}\right].
\end{aligned}
\]
The resulting matrix $W_0=U\Sigma V^T$ is highly non-normal because $\vvec u_1\cdot \vvec v_1=\vvec u_2\cdot \vvec v_2=0$. For this example, $P$ is zero along the diagonal (Figure~\ref{F3}e), but  all singular values of $P$ are $\sigma_P=1$, so the network still produces high-dimensional dynamics (Figure~\ref{F3}f). 
 Note that $W$ does not have any eigenvalues with large, negative real part (Figure~\ref{F3}g). Hence, this example combined with the previous example (Figure~\ref{F3}b--d) show that high-dimensional dynamics can arise in the presence or absence of eigenvalues with large, negative real part.

To obtain an example of a network that produces \textit{low}-dimensional dynamics (see also the example in Figure~\ref{F2}), we  set 
\[
\begin{aligned}
V&=\left[\begin{array}{cc}-\vvec u_1 & \vvec v_\perp \end{array}\right].
\end{aligned}
\]
where $\vvec v_\perp$ is orthogonal to $\vvec u_1$ and $\vvec u_2$, so  
$P$ has one singular value at $\sigma_P=1$ and another at $\sigma_P=0$ (Figure~\ref{F3}h). Because of the singular value at zero, the condition in Claim 1 is not met, and the network produces low-dimensional dynamics in which one principal component captures an outsized proportion of the variability (Figure~\ref{F3}i). For this example, $W$ has an eigenvalue with large, negative real part (Figure~\ref{F3}j). However, low-dimensional dynamics also arise in the case (Figure~\ref{F3}k,l)
\[
\begin{aligned}
V&=\left[\begin{array}{cc}-\vvec u_2 & \vvec v_\perp \end{array}\right].
\end{aligned}
\]
in which case $W$ lacks eigenvalues with large, negative real part (Figure~\ref{F3}m). Hence, these two examples demonstrate that low-dimensional dynamics can arise in the presence or absence of eigenvalues with large, negative real part.

So far, in all of the examples with high-dimensional dynamics, $W_0$ is an EP matrix. However, $W_0$ need not be fully EP for dynamics to be high-dimensional. 
We now consider the case in which left and right singular vectors are not perfectly aligned, but have some non-vanishing overlap, 
\begin{equation*}
\begin{aligned}
V&=\sqrt{1-c}\left[\begin{array}{cc}-\vvec u_1 &\,\vvec v_\perp\end{array}\right]+\sqrt{c}\left[\begin{array}{cc}\vvec -\vvec u_1 &\,-\vvec u_2\end{array}\right]
\end{aligned}
\end{equation*}
where $0<c<1$. This is a linear combination of the EP and non-EP matrices from the examples above. 
In this case, $W_0$ is not EP, but $P$ has a singular value at $\sigma_P=\sqrt c$ and the other at $\sigma_P=1$ (Figure~\ref{F3}n). 
As long as $c$ is not close to zero, the condition in Claim~1 is satisfied, and the network produces high-dimensional dynamics  (Figure~\ref{F3}o). Also, $W$ has eigenvalues with large, negative real part (Figure~\ref{F3}p). From this example, we may conclude that the low-rank part of the connectivity matrix only needs to have a non-vanishing EP component to produce high-dimensional dynamics. 


The numerical results in Figure~\ref{F3} provide an interpretation of Claim 1: If $W_0$ has a non-vanishing component with the EP property, then dynamics are high-dimensional. To our knowledge, this connection between the EP property of matrices and the dimensionality of network dynamics has not been made before.

The examples in Figure~\ref{F3} also demonstrate that the eigenvalue spectrum is generally not useful for determining whether dynamics are high- or low-dimensional in strongly low-rank networks near a stable equilibrium with high-dimensional input. {Instead, as Claim~1 shows, the singular values of the recurrent alignment matrix, $P$, are key to determining the dimensionality of dynamics in these networks.}

{
When low-dimensional dynamics are present, they arise from an effectively feedforward mechanism that takes variability from $\vvec x(t)$ and $W_1\vvec z(t)$ aligned with the column space of $V$ and maps it to strong variability along the column space of $U$ (see Appendix~\ref{SStochasticAnalysis} for more details). This mechanism is a form of non-normal amplification~\cite{trefethen1991pseudospectra,murphy2009balanced,hennequin2012non}. Our results add some new results to the literature on non-normal amplification under the conditions in Assumptions 1--3. First, our results show that this form of non-normal amplification requires that $W$ is highly non-EP, not just highly non-normal. See, for example, the example in Figure~\ref{F3}e--g, in which $W_0$ is highly non-normal, but EP so that amplification does not occur. Secondly, our results on low-rank suppression show that the amplified responses in the direction of $U$ associated with non-normal amplification are accompanied by suppressed responses approximately aligned to $V$ (see Figure~\ref{F2}e,f). Finally, Claim~1 shows that non-normal amplification is the \textit{only} way to generate low-dimensional dynamics under Assumptions 1--3.
}

It is common in theoretical work to consider models in which  the entries of $U$ and $V$ are random and independent with zero mean. In this case, $\vvec u_j$ and $\vvec v_k$ are nearly orthogonal so $\sigma_P\ll 1$ and the network produces low-dimensional dynamics (Figure~\ref{SuppFigNonNormal} in Appendix~\ref{AppSuppFigures}; compare to Figure~\ref{F2}). However, many networks arising in nature do not have purely random  structure. Next, we show  that many naturally arising network structures satisfy our conditions for  high-dimensional dynamics. 

\section{Biased weights and modular networks}

 \begin{figure*}
 \centering{
 \includegraphics[width=7in]{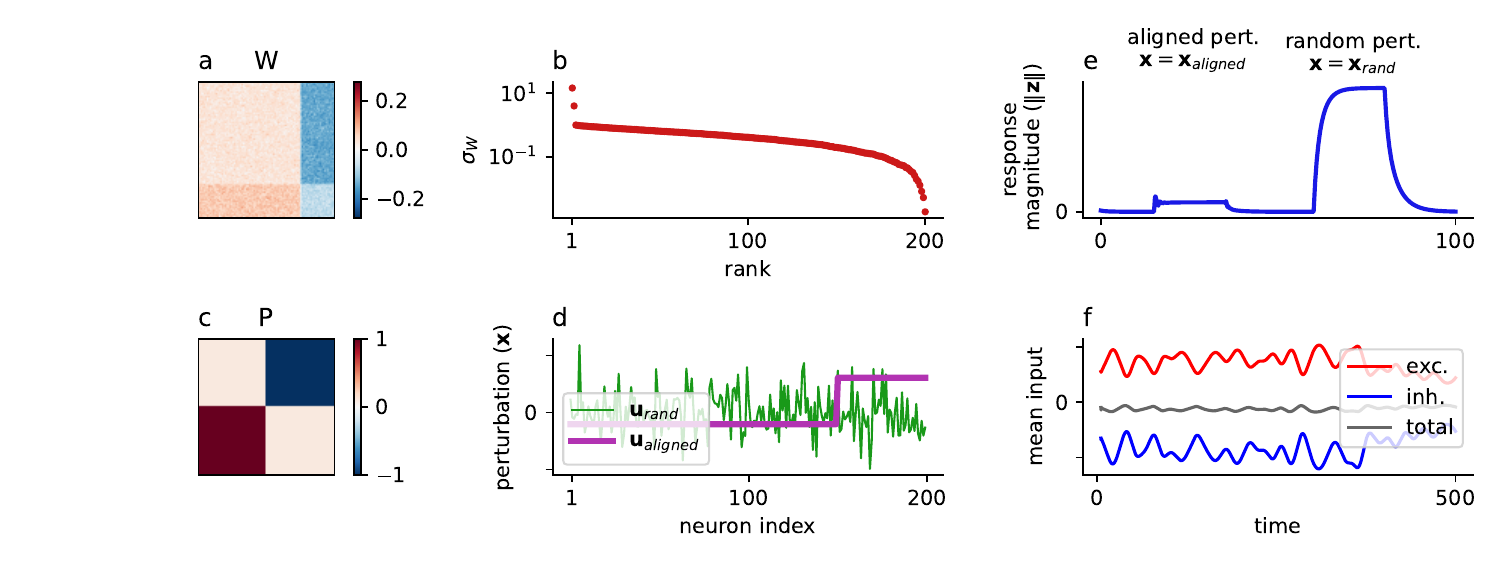}
 }
 \caption{{\bf Low-rank suppression and excitatory-inhibitory balance in a modular network.} {\bf a,b)} A modular network with biased blocks modeling excitatory and inhibitory neurons has low-rank structure. {\bf c)} The alignment matrix shows that the network is EP, but not normal. {\bf d)} An input that is constant within each block (purple) is aligned to the low-rank part, but a random input (green) is not. {\bf e)} {The norm of $\vvec z$ is more than 13 times smaller in response to the aligned versus random perturbation.} 
 {\bf f)} The excitatory (positive; red) component of mean input (local and external combined) balances with the inhibitory (negative; blue) component to produce a much smaller total input (gray), a widely observed phenomenon in neural circuits. 
 Network dynamics obey Eq.~\eqref{EdzdtNonLin}. 
 }
 \label{F4}
 \end{figure*}

So far, we considered networks with unbiased weights, $E[W_{jk}]=0$, which is common in modeling studies, but many networks in nature have weights with non-zero mean. Biased weights can produce  low-rank structure. As a simple example, consider a random network with independent weights satisfying
\[
E[W_{jk}]=m\;\textrm{ and }\; \textrm{std}(W_{jk})=s.
\]
If $m$ and $s$ scale similarly, then the largest singular value of $W$ is near $\sigma_1=|m|N$ while the next-largest singular value  scales like $\sigma_2= \mathcal O(s \sqrt N)$,  implying  an effective rank-one structure when $N$ is large. 
The dominant rank-one part has constant entries, so it is normal and the network exhibits low-rank suppression and high-dimensional dynamics (Figure~\ref{SuppFigBias}  in Appendix~\ref{AppSuppFigures}; see Appendix B in~\cite{mackay1990analysis} for related results).

More generally, modular structure arises in many natural settings~\cite{thibeault2024low,gu2024emergence}. Specifically, many networks in nature represent the interaction between $n$ populations, and mean connection weights between populations are often non-zero. The adjacency matrices of modular networks can be arranged to have a block structure 
\[
W=\left[\begin{array}{ccc}W^{1,1} &  \cdots & W^{1,n} \\ W^{2,1} &  \cdots & W^{2,n} \\ \vdots & \cdots & \vdots\\ W^{n,1} & \cdots & W^{n,n}\end{array}\right]
\]
where $W^{a,b}$ is a $N_a\times N_b$ sub-matrix quantifying connections from population $b$ to population $a$. In general, each sub-matrix can have a different, non-zero mean and standard deviation,
\[
E\left[W^{a,b}_{jk}\right]=m_{ab}\;\textrm{ and }\; \textrm{std}\left(W^{a,b}_{jk}\right)=s_{ab}.
\]
If we assume that  each population has $N_a\sim\mathcal O(N)$ members and that the $m_{ab}$ scale similarly to the $s_{ab}$, then $W$ has up to $n$ dominant singular values that scale like $\mathcal O(m_{ab}N)$ and the remaining singular values scale like $\mathcal O(s_{ab}\sqrt N)$. 
Hence, when $s_{ab}\sim m_{ab}$, large modular networks with biased weights naturally produce a low-rank structure in the sense that a small number of singular values are asymptotically larger than the rest~\cite{thibeault2024low}.  If additionally $m_{ab}\gg \mathcal O(1/N)$, then the dominant singular values are asymptotically large, so the network is strongly low-rank in the sense defined in  Assumption 2.

Networks of this form can be decomposed as $W=W_0+W_1$ where 
$
W_0=E[W]
$
is constant within each block, so $W_0$ has rank at most $n$. In general, $W_0$ is not a normal matrix, but it is an EP matrix because $\textrm{col}(U)$ and $\textrm{col}(V)$ are each spanned by the indicator vectors of the $n$ populations,
\[
\textrm{col}(U)=\textrm{col}(V)=\textrm{span}\left\{\vvec 1_1,\vvec 1_2,\ldots,\vvec 1_n\right\}
\]
where each $\vvec 1_k$ is an $N$-dimensional indicator vector with entries defined by 
\[
[\vvec 1_k]_{j} =\begin{cases}1 & \textrm{if index $j$ is in population $k$}\\ 0 &\textrm{otherwise}\end{cases}
\]
Therefore, modular networks with biased weights exhibit low-rank suppression and high-dimensional dynamics. 

As a specific example of a modular network, we consider a model of a local neuronal network in the cerebral cortex. Cortical neurons obey Dale's Law: All outgoing connections from a particular neuron have the same polarity, positive for excitatory neurons and negative for inhibitory neurons, and mean connection weights also depend on the postsynaptic neuron type~\cite{levy2012spatial,pfeffer2013inhibition}. These properties produce a modular structure  in which the columns of the adjacency matrix corresponding to excitatory neurons are non-negative, while the columns corresponding to inhibitory neurons are non-positive. Without loss of generality, we can order the neurons so that the first $N_e$ neurons are excitatory and the remaining $N_i$ are inhibitory (with $N=N_e+N_i$). In this case, $W$ has a $2\times 2$ block structure (Figure~\ref{F4}a).
We also assume that $N_e,N_i\sim\mathcal O(N)$, consistent with the fact that around 80$\%$ of neurons in cortex are excitatory.  We consider the case in which $m_{ab}$ and $s_{ab}$ scale like $1/\sqrt N$, consistent with experiments~\cite{barral2016synaptic} and theoretical work~\cite{landau2021macroscopic,van1996chaos,renart2010asynchronous,rosenbaum2017spatial,baker2019correlated,o2022direct}. 

Together, these biologically justified assumptions imply that the network has a strongly low rank structure, as defined in Assumption 2, with rank $r=2$ (Figure~\ref{F4}b). Specifically, two singular values scale like $\sqrt N$ while the others are $\mathcal O(1)$. Moreover, the low-rank part of the connectivity matrix is EP (Figure~\ref{F4}c). Hence, the network produces high-dimensional dynamics and low-rank suppression despite the fact that $W$ is low-dimensional and non-normal.  
The column space of $U$ and $V$ consist of all vectors that are uniform within each population,
\[
\begin{aligned}
\textrm{col}(U)=\textrm{col}(V)&=\textrm{span}\{\vvec 1_e,\vvec 1_i\}\\
&=\{[a\;\cdots\;a\; b\;\cdots\; b]^T\,|\, a,b\in\R\}
\end{aligned}
\]
where $\vvec 1_e$ and $\vvec 1_i$ are indicator vectors for the excitatory and inhibitory populations. 
Therefore, external inputs, $\vvec x$, that are uniform within each population are aligned to the low-rank part and suppressed relative to random perturbations (Figure~\ref{F4}d,e). In other words, external inputs that stimulate all excitatory neurons equally and all inhibitory neurons equally are suppressed relative to inputs that are  inhomogeneous within one or both populations, consistent with previous work on balanced network models~\cite{o2022direct,pyle2016highly,landau2016impact,ebsch2018imbalanced}. 
As predicted, simulations also show high-dimensional responses to high-dimensional stimuli (Figure~\ref{SuppFigModular} in Appendix~\ref{AppSuppFigures}), consistent with observations that neural responses in monkey visual cortex are high-dimensional when visual stimuli are high-dimensional~\cite{stringer2019high}.

Averaging over the excitatory and inhibitory populations in this network represents a projection onto $\textrm{col}(U)=\textrm{col}(V)$. As a result, the cancellation mechanism illustrated in Figure~\ref{F1}h manifests as a tight balance between mean excitatory (positive) and inhibitory (negative) input to neurons (Figure~\ref{F4}f), a phenomenon that is widely observed in neural recordings~\cite{okun2008instantaneous,atallah2009instantaneous,adesnik2010lateral,Shu:2003ht,wehr:2003,Haider:2006gs,Dorrn:2010hu} and widely studied in computational models~\cite{van1996chaos,renart2010asynchronous,ebsch2018imbalanced,landau2021macroscopic}. Hence, the widely studied theory of balanced networks can be interpreted as a special case of the low-rank suppression and low-rank cancellation studied here, specific to the particular modular network structure arising from Dale's law.

 \begin{figure*}
 \centering{
 \includegraphics[width=5.5in]{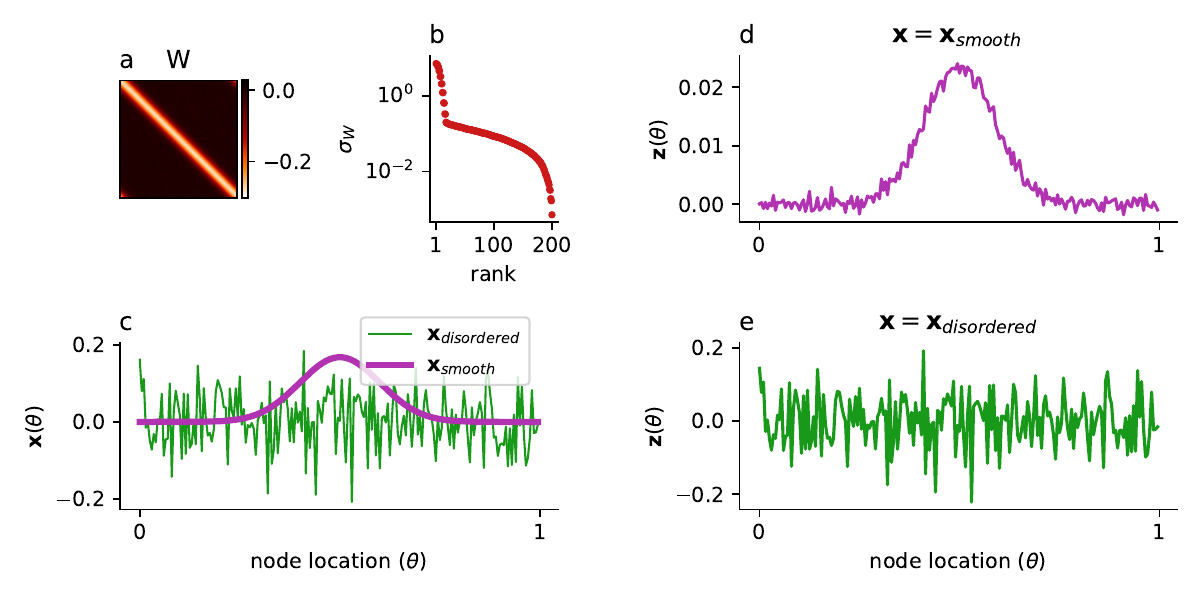}
 }
 \caption{{\bf Low-rank suppression in a network with spatial structure.} {\bf a)} Connectivity matrix, $W$. Connection strength is a Gaussian function of distance. {\bf b)} Singular values of $W$ demonstrate effective low-rank structure. {\bf c)} A spatially smooth perturbation (purple) is aligned to the low-rank part of $W$ while a spatially disordered perturbation (green) is not. {\bf d,e)} Therefore, the network response to a smooth perturbation is much weaker than the response to a disordered perturbation (compare vertical tick marks). {The norm of $\vvec z$ is more than 7 times smaller in response to the smooth versus disordered perturbation.} 
 {Network dynamics obeyed Eq.~\eqref{EdzdtNonLin}}.}
 \label{F5}
 \end{figure*}

\section{Amplification of disordered perturbations in networks with spatial structure}

Many networks in nature exhibit connection strength that depends smoothly on the distance between nodes in physical or other spaces, resulting in an effective low-rank structure~\cite{thibeault2024low,rosenbaum2017spatial}. As a simple example, we consider a model in which each node is assigned a spatial location, $\theta\in[0,1)$, and connection weights in $W_0$ decay like a Guassian function of the distance, $d\theta$, between nodes computed with periodic boundaries. We assume strong coupling in the sense that individual weights scale like $1/\sqrt N$. In this case, only nearby nodes are  strongly connected (Figure~\ref{F5}a).  Note that stability requires that connections in $W_0$ are negative since coupling is strong.  Connectivity  is also perturbed by a random component, $W_1$, as above. This connectivity structure is effectively low-rank (Figure~\ref{F5}b) and the low-rank part is normal, with left  and right singular vectors spanned by the low-frequency Fourier basis vectors (Figure~\ref{SuppFigFourier} in Appendix~\ref{AppSuppFigures}). Therefore, networks of this form satisfy our conditions for low-rank suppression and high-dimensional dynamics (Figure~\ref{SuppFigSpace} in Appendix~\ref{AppSuppFigures}).

Perturbations that are smooth in space are aligned to the low-rank part of the connectivity matrix because they are formed by sums of low-frequency spatial Fourier modes. Hence, perhaps surprisingly, strongly connected networks with spatial structure are more sensitive to spatially disordered perturbations than to spatially smooth perturbations (Figure~\ref{F5}c-e; compare tick labels on vertical axes).

\section{Low-rank suppression and high-dimensional dynamics in an epidemiological network.}

We next consider a real epidemiological network, specifically a network of high school social contacts~\cite{mastrandrea2015contact}, which was used in recent theoretical work on low-rank network dynamics~\cite{thibeault2024low}. In that work, the authors considered a quenched mean-field reduction of the susceptible-infected-susceptible  model, 
\begin{equation}\label{SISa}
\tau\frac{d\vvec z}{dt}=-\vvec z+\gamma(1-\vvec z)\circ Q\vvec z
\end{equation}
where $\gamma>0$ and $\circ$ denotes element-wise multiplication. Each $\vvec z_j(t)$ models the probability that an individual is infected. 
Here, $Q$ is the proportional to the proximity matrix of 637 high school students, indicating whether each pair of students were in proximity of each other during a specific school week~\cite{mastrandrea2015contact}. 
This matrix  is effectively low-rank in the sense that it has a small number of dominant singular values (Figure~\ref{SuppFigEpidemiological}a in Appendix~\ref{AppSuppFigures}). In~\cite{thibeault2024low}, it was shown that dynamics generated by Eq.~\eqref{SISa} on this network are effectively low-dimensional.

  \begin{figure}
 \centering{
 \includegraphics[width=2.75in]{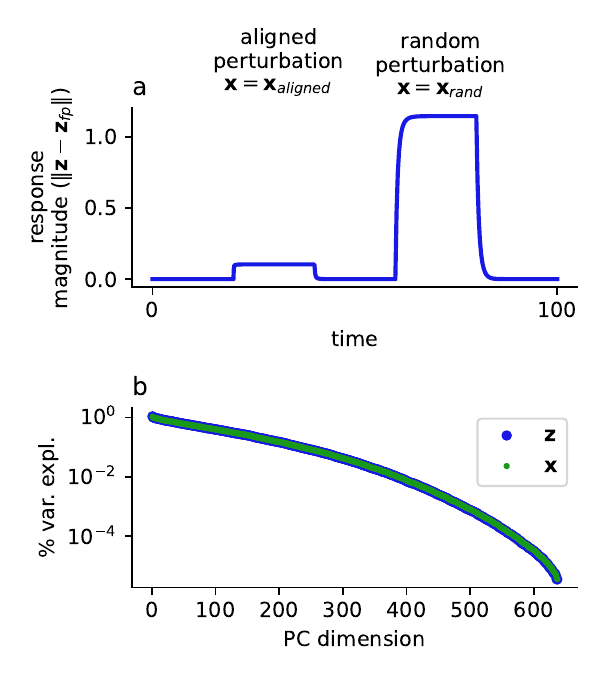}
 }
 \caption{{\bf Low-rank suppression and high-dimensional dynamics in a real epidemiological network.} {\bf a)} The response to a perturbation aligned with the dominant low-rank part is weaker than the response to a perturbation of the same magnitude in a random direction. {The distance of $\vvec z$ from its fixed point is more than 10 times smaller in response to the aligned versus random perturbation.}  {\bf b)} In the presence of high-dimensional random perturbations, the variance explained by each principal component of the dynamics is similar to the input, indicating high-dimensional dynamics. Network dynamics obeyed Eq.~\eqref{EdzdtNonLin}.}
 \label{F6}
 \end{figure}
  
However, Eq.~\eqref{SISa} is completely self-contained without any  perturbations. {Perturbations arise in epidemiological dynamics through interactions with individuals from outside of the modeled network and through the natural stochasticity of infection. Specifically, $\vvec z(t)$ represents a probability of infection. Actual infection events will be random, which can be approximated by adding a stochastic forcing term to Eq.~\eqref{SISa}. Moreover, students interact with siblings and parents at home (who are not included in the network represented by $Q$), which produces additional external forcing. 
As a simple model of these perturbations, we modified the model by adding an external forcing term to obtain}
\begin{equation}\label{SISx}
\tau\frac{d\vvec z}{dt}=-\vvec z+\gamma(1-\vvec z)\circ \left[Q\vvec z+\vvec x(t)\circ \vvec z\right].
\end{equation}

Eq.~\eqref{SISx} admits a fixed point when $\vvec x(t)=0$. Following the linearization discussed in Section~\ref{SLin} and Appendix~\ref{SuppLin}, the effective connectivity, $W$, is defined by the Jacobian matrix at the fixed point. We find that, like $Q$, the effective connectivity matrix, $W$, has a small number of dominant singular values, so our theory predicts that this network exhibits low-rank suppression. Moreover, the recurrent alignment matrix, $P$, defined from $W$ satisfies the conditions for high-dimensional dynamics from Claim~1 (Figure~\ref{SuppFigEpidemiological}a in Appendix~\ref{AppSuppFigures}). 
Simulations of Eq.~\eqref{SISx} confirm that the model exhibits low-rank suppression (Figure~\ref{F6}a and  Figure~\ref{SuppFigEpidemiological}b--d in Appendix~\ref{AppSuppFigures}) and high-dimensional dynamics (Figure~\ref{F6}b) in contrast to the model without external input studied in~\cite{thibeault2024low}.

Importantly, the ``aligned'' input direction that we used to demonstrate low-rank suppression in Figure~\ref{F6}a is defined in terms of the dominant left singular vector of the \textit{effective} connectivity matrix, $W$, \textit{i.e.}, the Jacobian at the fixed point. Using the left singular vector of the explicit connectivity, $Q$, in place of the effective connectivity, $W$, does not produce suppression (see Figure~\ref{SuppFigEpidemiological}c,d in Appendix~\ref{AppSuppFigures}). This observation validates our linearization argument and highlights the necessity of using effective connectivity in place of explicit connectivity in nonlinear networks.

Taken together, our results highlight the importance of accounting for external perturbations and internal noise when studying the dimensionality of epidemiological dynamics. Moreover, the results imply that epidemiological networks can be  more sensitive to random perturbations than to perturbations aligned with the network's low-rank structure. For example, if the forcing term is taken to represent interventions like masking or inoculation, then our results predict that it is more effective to apply these interventions randomly to individuals than to apply interventions to individual sub-populations or other structures reflected in the dominant structure of the proximity matrix.

\section{Discussion}

We presented theory and examples showing that strongly low-rank recurrent networks close to a stable equilibrium suppress inputs or perturbations aligned with the dominant directions of their connectivity matrices. We also derived conditions under which these networks generate high-dimensional dynamics.  We showed that many low-rank structures that arise in nature are consistent with low-rank suppression and high-dimensional dynamics.

Recent parallel work~\cite{mastrogiuseppe2025stochastic} also demonstrated that low-rank networks can produce suppression and high-dimensional dynamics under some assumptions. In that work, connectivity matrices had a small number of non-zero singular values which were $\mathcal O(1)$ in magnitude, in contrast to the large singular values assumed in our models (see Appendix~\ref{SuppComparison} for a more complete comparison). Other related work showed that trained neural networks can exploit suppression to cancel noise~\cite{schuessler2024aligned}.

{
Our results  rely on a linearization of the dynamics around a stable fixed point. The dimensionality of nonlinear network dynamics away from a stable equilibrium is also an important question. However, nonlinear networks can produce any dynamics in response to any inputs~\cite{maass2002real,jaeger2004harnessing} and effective connectivity can change with network state, so it is difficult to draw broad, general conclusions about the dimensionality of dynamics in nonlinear networks. Previous work~\cite{thibeault2024low} circumvented this difficulty by parameterizing the model in terms of the vector of local inputs from the network, $\vvec y=W\vvec z$, and then used bounds on the Jacobian matrix of the dynamics with respect to $\vvec y$, but they did not consider external inputs or perturbations. Combining our approaches by incorporating external inputs into their analytical approach is a promising direction for future work.}

{
We focused on the response of low-rank networks to external input, whereas much previous work focuses on the intrinsic dynamics of low-rank networks in the absence of external drive. Landau and Sompolinsky~\cite{landau2021macroscopic} consider a similar model to ours, but look at the interaction between externally driven responses and intrinsic, chaotic dynamics. However, they restrict to low-dimensional external inputs in the column space of the low-rank part of the connectivity matrix. Future work could combine these approaches to better understand the interaction between chaotic, intrinsic dynamics and high-dimensional external input.
}

It is tempting to ascribe low-rank suppression to the presence of an eigenvalue with large, negative real part (as in Figure~\ref{F1} and Figure~\ref{F3}b--d). A large, negative eigenvalue is well-known to produce a suppressive dynamic.  However, the results in Figure~\ref{F2} and Section~\ref{SLDD} demonstrate that the eigenvalue spectrum is not generally relevant to the presence or absence of low-rank suppression. 
{In other words, in recurrent networks driven by external input near a stable equilibrium, low-rank suppression depends only on the presence of large singular values of the connectivity matrix, $W$. 
High-dimensional dynamics, on the other hand, depend additionally on the singular values of the recurrent alignment matrix, $P=V^TU$.}

Our results  have implications for networks in nature and applications. For example, in neuroscience, the implications of low-rank recurrent connectivity on neural dynamics is a topic of intense research~\cite{mastrogiuseppe2018linking,landau2021macroscopic,dubreuil2022role,landau2018coherent,schuessler2020dynamics,valente2022extracting,cimevsa2023geometry,beiran2023parametric,mastrogiuseppe2025stochastic,clark2025connectivity}. Our results are consistent with an observation that networks of neurons produce high-dimensional activity in the context of high-dimensional stimuli or tasks, but low-dimensional dynamics in response to low-dimensional stimuli or tasks~\cite{stringer2019high}.

Beyond neuroscience, our results imply that perturbations to a low-rank network are more effective when they are delivered non-uniformly across sub-populations or space. More generally, perturbations to a low-rank network are more effective when they are not aligned to the network's low rank structure. This result can be leveraged to design and test more effective interventions, for example to epidemiological, ecological, or social networks.

\section{Methods}

All simulations and analyses were performed in Python using a combination of custom written PyTorch and NumPy code. All differential equations, except for Figure~\ref{F6} and  Figure~\ref{SuppFigEpidemiological}  in Appendix~\ref{AppSuppFigures}, were solved using a simple forward Euler scheme with a step size of $dt=0.01$. The simulations in Figure~\ref{F6} and  Figure \ref{SuppFigEpidemiological}  in Appendix~\ref{AppSuppFigures} were solved using a Runge-Kutta scheme adapted from the approach used in previous work~\cite{thibeault2024low}. {Networks in Figures~\ref{F1} and \ref{F2} used the linear model from Eq.~\eqref{Edzdt1}. Networks in Figures~\ref{F3}--\ref{F5} used the nonlinear model from Eq.~\eqref{EdzdtNonLin}. }
Code to produce all figures is available at\\ \texttt{https://github.com/RobertRosenbaum/HighDimLowDimCode}

\begin{acknowledgments}
We thank Ashok Litwin-Kumar and David G.~Clark for helpful conversations and comments on drafts of the manuscript. This work was supported by the Air Force Office of Scientific Research (AFOSR) awards numbers FA9550-21-1-0223 and FA9550-26-1-0004, and the National Science Foundation under a Neuronex award  number DBI-1707400.
\end{acknowledgments}

\appendix

\section{Comparison to  previous work on dynamics of low-rank networks}\label{SuppComparison}

{
Here, we focus on the response of networks to high-dimensional external input. 
Some previous work on low-rank networks and low-dimensional dynamics focuses on the dimensionality of \textit{intrinsic} dynamics generated by the network in the absence of external input or in the presence of low-dimensional external input. For example chaotic and other nonlinear dynamics in untrained and trained networks can be low-dimensional~(see \cite{sussillo2013opening,mante2013context,aljadeff2016low,huang2019circuit,thibeault2024low} and Figures~\ref{SuppFigUnstable} and \ref{SuppFigUnstable2} in Appendix~\ref{AppSuppFigures}).
In contrast, we are here primarily interested in the network's response to high-dimensional external input. In our model's, the network's activity is driven primarily by the external input (which is filtered by the network's internal structure) and not by intrinsic dynamics. When the network is strongly low-rank, we showed that low-dimensional dynamics can only arise when the recurrent alignment matrix has small singular values (see Claim~1). 
}

In this section we review the relationship between our models and the models from previous theoretical work on low-rank networks, especially work by Ostojic et al.~\cite{mastrogiuseppe2018linking,schuessler2020dynamics,dubreuil2022role,valente2022extracting,cimevsa2023geometry,beiran2023parametric}, work by Thibeault et al.~\cite{thibeault2024low}, and work by Landau and Sompolinsky~\cite{landau2018coherent,landau2021macroscopic}

All of these studies  consider recurrent networks with connectivity of the form $W=W_0+W_1$ where $W_0$ is low-rank and $W_1$ is a full rank random matrix, just like our model. We next describe the salient properties that distinguish these models from ours.

Thibeault et al.~\cite{thibeault2024low} assume that their networks are self-contained and do not receive any time varying external input. Specifically, they explicitly restrict their dynamics to models of the form
\[
\frac{d\vvec  z}{dt}=F(\vvec z,W\vvec z)
\]
which excludes the possibility of modeling a time-varying external input, like $\vvec x(t)$ in our model. Models of this form cannot describe networks that are part of a larger network, or networks that are modulated by time-varying, external factors. Similar assumptions were made in one study by Landau and Sompolinsky~\cite{landau2018coherent}. 
In other work by Landau and Sompolinsky~\cite{landau2021macroscopic}, external input was included, but this input was assumed to be perfectly aligned to the low-rank structure of the connectivity matrix and therefore low-dimensional.

The salient differences between our model and the models considered by Ostojic and colleagues~\cite{mastrogiuseppe2018linking,schuessler2020dynamics,dubreuil2022role,valente2022extracting,cimevsa2023geometry,beiran2023parametric} are more subtle. Like us, they  consider  external inputs that are not aligned to the low-rank part of $W$. Also, like us, they take $W=W_0+W_1$ where $W_1$ is full rank with random entries and the variance of the entries in $W_1$ scale like $\mathcal O(1/N)$ so that the maximum singular value of $W_1$ scales like $\mathcal O(1)$. 

However, in contrast to our models, the low-rank components of the networks considered by  Ostojic and colleagues take the form
\begin{equation}\label{EW0Ostojic}
W_0=\sum_{\mu=1}^r \frac{\vvec m_\mu\vvec n_\mu^T}{N}
\end{equation}
where $r$ is the rank and each $\vvec m_\mu$ and $\vvec n_\mu$ are $N\times 1$ vectors with entries that scale like $\mathcal O(1)$. Specifically, they are taken to be random vectors and the variance of their entries scales like $\mathcal O(1)$. Often, they are taken to be biased random vectors with a non-zero mean that also scales like $\mathcal O(1)$. Because of the $1/N$ factor in Eq.~\eqref{EW0Ostojic}, the variance of the entries in $W_0$ scale like $\mathcal O(1/N^2)$ and, when the entries are biased, the mean entry in $W_0$ scales like $\mathcal O(1/N)$. Regardless of whether entries are biased, the singular values of $W_0$ scale like $\mathcal O(1)$ in the models considered by Ostojic and colleagues~\cite{mastrogiuseppe2018linking}. Hence, the singular values of the low-rank part ($W_0$) and the random part ($W_1$) have the same scale whenever the spectral radius of $W_1$ is $\mathcal O(1)$, in contrast to our models in which the singular values of $W_0$ are considered to be asymptotically larger than the singular values of $W_1$. 
This difference in scaling is acknowledged by Ostojic et al.~\cite{mastrogiuseppe2018linking,schuessler2020dynamics,dubreuil2022role,valente2022extracting,cimevsa2023geometry,beiran2023parametric} who refer to their networks as ``weakly low-rank.'' 


{To summarize, when $W_1$ is a random matrix with $\mathcal O(1)$ spectral radius and when the entries of $W_0$ are biased, the primary difference between  many {strongly} and  {weakly} low-rank networks in practice is that the entries of $W_0$ scale like $\mathcal O(1/N)$ in weakly low rank networks while they are larger (typically $\mathcal O(1/\sqrt N)$) in many strongly low-rank networks, for example in work on balanced networks~\cite{landau2018coherent,landau2021macroscopic}.
The transition between these two regimes can be visualized by gradually scaling the magnitude of the low-rank part from weak to strong. Figure~\ref{SuppFigscalec} in Appendix~\ref{AppSuppFigures} shows that low-rank suppression arises gradually in this case. 
}

The distinction between strongly and weakly low-rank networks is more nuanced when the full-rank, random part, $W_1$, of the connectivity matrix is assumed to be weak or absent. For example, recent work by Mastrogiuseppe et al.~\cite{mastrogiuseppe2025stochastic} considered the case in which the full-rank part is absent ($W_1=0$) and the low-rank connectivity matrix, $W=W_0$, has $\mathcal O(1)$ singular values. While their use of $\mathcal O(1)$ singular values is similar to weakly low-rank networks from other work~\cite{mastrogiuseppe2018linking}, the absence of a random part means that their connectivity matrices are truly low-rank (not just approximately low-rank). In that work~\cite{mastrogiuseppe2025stochastic}, Mastrogiuseppe et al.~showed that the emergence of suppression and high-dimensional dynamics depends on the degree of overlap between the left  and right singular vectors in ways that are not captured by our asymptotic theory.

For the sake of comparison to the models in~\cite{mastrogiuseppe2025stochastic}, we consider a rank-one network structure like the one in Figure~\ref{F1}.
Specifically, we  take $W_1$ to be random with spectral radius $\rho>0$, and take 
\[
W_0=c\vvec u \vvec u^T.
\]
In Figure~\ref{F1} and its analysis, we assume that $\rho\sim\mathcal O(1)$. In this case, a strong (\textit{i.e.}, asymptotically dominating) low-rank structure requires that $|c|\gg 1$. Therefore, stability in this case requires that $c<0$, which produces strong, negative eigenvalues (as in Figure~\ref{F1}; however, consider also the examples in Figure~\ref{F2}, Figure~\ref{F3}k--m, and Figure~\ref{SuppFigNonNormal} which are strongly low-rank without large eigenvalues).  
We could alternatively take the random, full-rank part to be weak: $\rho\ll 1$ (or it could be absent, $\rho=0$, as in~\cite{mastrogiuseppe2025stochastic}). In this case, we can take the low-rank part to be moderate in magnitude ($|c|=\mathcal O(1)$) while maintaining an asymptotically dominant low-rank structure because $|c|\gg \rho$ when $|c|=\mathcal O(1)$ and $\rho\ll 1$. Repeating the simulation from Figure~\ref{F1} demonstrates a weakened form of low-rank suppression with high-dimensional dynamics when $c<0$ and an amplification of aligned inputs with weakly low-dimensional dynamics when $c>0$ (Figures~\ref{SuppFigWeak} and \ref{SuppFigWeak2} in Appendix~\ref{AppSuppFigures}). {We also repeated all of the simulations from Figure~\ref{F3} in networks with $\rho\ll 1$ and $\sigma_1,\sigma_2\sim\mathcal O(1)$, so that the network is still low-rank in the sense that the first two singular values dominate, but not strongly low-rank because they are $\mathcal O(1)$ (see Figure~\ref{SuppFigWeak3}a). Under these conditions, the dynamics were high-dimensional regardless of the structure of $W_0$ (Figure~\ref{SuppFigWeak3}b--f). To understand this result, note that low-dimensional dynamics in our models (Figures~\ref{F2} and \ref{F3}i,l) arise from strong amplification of a small number of directions by $W$. When the low-rank part of $W$ is weaker, these low-dimensional dynamics will be weaker or absent. In other words, when the dominant singular values of $W_0$ are $\mathcal O(1)$ and smaller, the identity in $[I-W]^{-1}$ from the quasi-stead state analysis more strongly dominates the $W$ (or, at least, the $W$ does not dominate the $I$). More directly, the $\vvec x(t)$ in Eq.~\eqref{Edzdt1} dominates the $W\vvec z$. Therefore, high-dimensional inputs are simply transferred to high-dimensional responses when $W$ is weaker.}

 When $c<0$, the network exhibits a weaker form of low-rank suppression along with high-dimensional responses (Figure~\ref{SuppFigWeak} in Appendix~\ref{AppSuppFigures}). Similar examples and results are considered in~\cite{mastrogiuseppe2025stochastic}. In conclusion, some of our theoretical results on low-rank suppression and high-dimensional dynamics require that the low-rank part is large in \textit{absolute} terms ($|c|\gg 1$), not just \textit{relative} terms ($|c|\gg \rho$). 

When $|c|=\mathcal O(1)$, we can  take $c>0$ while keeping the network stable with a dominant low-rank part (in contrast to strongly low rank networks like Figure~\ref{F1} where stability requires $c<0$). When $c>0$, inputs aligned to $\vvec u$ are  \textit{amplified} compared to random perturbations (Figure~\ref{SuppFigWeak2} in Appendix~\ref{AppSuppFigures}), reversing the trend of low-rank suppression. However, the amplification is weak. In response to high-dimensional input, the network dynamics have a dominant principal component direction, but the dominance is weaker than in other examples we have considered (Figure~\ref{SuppFigWeak2}d; compare to Figure~\ref{F2} and Figure~\ref{SuppFigNonNormal}) because stability imposes a bound on $c$ (specifically, $c<1$) and therefore on $\|W_0\|$.

Landau and Sompolinsky~\cite{landau2018coherent,landau2021macroscopic} consider strongly low-rank networks. However, as noted above, their external input, $\vvec x(t)$, is aligned to the low-rank part of $W$ and is therefore low-dimensional itself. 
Thibeault et al.~\cite{thibeault2024low} consider several network models, but their ``rank-perturbed Gaussian'' model is strongly low-rank and equivalent to the network structure we study in Figure~\ref{F1}. Complicating matters, Thibeault et al. directly compare their rank-perturbed Gaussian model to the networks in Ostojic et al., despite the fact that the scaling of their low-rank parts differ by a magnitude of $\sqrt N$. Specifically, in Thibeault et al.~\cite{thibeault2024low}, the low-rank part is defined by Eq.~\eqref{EW0Ostojic} where $\vvec m$ and $\vvec n$ are unbiased Gaussian random vectors. The entries of $\vvec m$ have $\mathcal O(1/N)$ variance while the entries of $\vvec n$ have $\mathcal O(1)$ variance, so the entries of $W_0$ have $\mathcal O(1/N)$ variance, in contrast to the $\mathcal O(1/N^2)$ variance used by Ostojic et al.~\cite{mastrogiuseppe2018linking}. Importantly, this means that the singular values of $W_0$ are $\mathcal O(\sqrt N)$ in the rank perturbed Gaussian model analyzed by Thibeault et al.~\cite{thibeault2024low}, but $\mathcal O(1)$ in the models by Ostojic et al. Hence, the rank perturbed Gaussian models considered by Thibeault et al. are strongly low-rank, in contrast to the weakly low-rank networks considered by Ostojic et al.

\section{Details for the linearization of dynamics around a stable equilibrium}\label{SuppLin}

Here, we make precise our assumption that dynamics represent a small perturbation around a stable equilibrium, and give a more precise derivation of the linearized dynamics.

We begin by considering nonlinear dynamics of the form
\[
\tau\frac{d\vvec z}{dt}=\vvec F(\vvec z,\boldsymbol \eta)
\] 
where $\boldsymbol \eta(t)$ is the external input before any change of coordinates is applied. 
We assume that there exists a constant-in-time, $\boldsymbol \eta(t)=\boldsymbol \eta_0$, that produces a stable steady-state solution, $\vvec z_0$. In other words, there exist $\boldsymbol \eta_0$ and $\vvec z_0$ such that 
\[
\vvec F(\vvec z_0,\boldsymbol \eta_0)=\vvec z_0.
\]
We additionally assume that this fixed point is hyperbolically stable. In other words, the Jacobian matrix 
\[
J=-I+\partial_{\vvec z}\vvec F(\vvec z_0,\boldsymbol \eta_0)
\]
has eigenvalues with strictly negative real part. Here, $I$ is the identity matrix and $\partial_{\vvec z}\vvec F(\vvec z_0,\boldsymbol \eta_0)$ is the Jacobian matrix of $\vvec F$ with respect to $\vvec z$ evaluated at the fixed point. We then consider a small perturbation around this fixed point driven by a perturbation to the forcing term,
\[
\boldsymbol \eta_p(t)=\boldsymbol \eta_0+\epsilon \boldsymbol \eta(t).
\]
The response, $\vvec z_p(t)$, of the network to the perturbed forcing term, $\boldsymbol \eta_p(t)$, can be written to linear order in $\epsilon$ as
\begin{equation*}
\vvec z_p(t)=\vvec z_0+\epsilon \vvec z(t)+\mathcal O(\epsilon^2).
\end{equation*}
The perturbation, $\vvec z(t)$, obeys the linearized equation
\begin{equation}\label{Edzdtgen}
\tau\frac{d\vvec z}{dt}=-\vvec z+W\vvec z+\vvec x(t)
\end{equation}
where
\begin{equation*}
W=\partial_{\vvec z}\vvec F(\vvec z_0,\boldsymbol \eta_0)
\end{equation*}
is interpreted as the effective connectivity matrix and
\[
\vvec x(t)=\partial_{\boldsymbol \eta}\vvec F(\vvec z_0,\boldsymbol \eta_0)\boldsymbol \eta(t)
\]
is interpreted as the effective input. Our assumption that the equilibrium is stable implies that all eigenvalues of $W$ have real part less than $1$. {We additionally assume that the real part of all eigenvalues of the Jacobian are bounded away from 0, meaning that the real part of all eigenvalues of $W$ are bounded below $1$. In other words, there is a $d>0$ with $d\sim\mathcal O(1)$ such that all eigenvalues of $W$ satisfy $\textrm{Re}(\lambda_W)<1-d$. In particular, this implies that $\rho<1-d$ where $\rho$ is the spectral radius of the random part, $W_1$, of $W$.} 

Note that Eq.~\eqref{Edzdtgen} is equivalent to the linear dynamics in Eq.~\eqref{Edzdt1}. Hence, the linear network dynamics in Eq.~\eqref{Edzdt1} are a sort of normal form for the linearized dynamics described here. Therefore, we are justified in focusing on the linear dynamics of Eq.~\eqref{Edzdt1} in the analysis in the Results. 

In many models, the perturbation is purely additive, $\vvec F(\vvec z,\boldsymbol \eta)=\vvec H(\vvec z)+\boldsymbol \eta$, so that the effective input is equal to the raw input, $\vvec x=\boldsymbol \eta$. In other models, the raw input can have a different structure and dimensionality to the effective input. For illustrative purposes, we consider a contrived example of such a model in Figure~\ref{SuppFigLowDimInput} in Appendix~\ref{AppSuppFigures}. 

If a model is fully linear, \textit{i.e.}, if a model is already written in the form of Eq.~\eqref{Edzdtgen}, then we do not need to assume that dynamics remain close to the equilibrium, so inputs and perturbations need not be weak when dynamics are linear. 

Some models are parameterized in terms of a connectivity matrix that is not equal to the Jacobian matrix. For example, the system might be written as $d\vvec z/dt=\vvec G(W\vvec z,\vvec x)$ where $W$ is meant to quantify a connectivity matrix, but $\partial_\vvec z \vvec G(W\vvec z_0,\vvec x_0)\ne W$. In this case, our interpretation of the effective connectivity matrix is not consistent with the original parameterization, but they will often share  structural properties such as their approximate dimensionality. We consider one such example from epidemiology  at the end of the Results. {In that example, we found that the use of effective connectivity in place of the explicit connectivity matrix is important for our results to hold (see Figure~\ref{SuppFigEpidemiological} in Appendix~\ref{AppSuppFigures})}

{
\section{Precise definitions of strongly low-rank networks and low-dimensional dynamics.}\label{SuppStrong}
As mentioned above, we focus our attention on strongly low-rank networks. Intuitively, this means that the connectivity matrix has a small number of large singular values and the rest are not large. To make this definition mathematically precise, we need to consider an asymptotic limit of large $N$, where $N$ is the network size. Let $\sigma_k$ be the $k$th largest singular value of $W$. In strongly low-rank networks, there exists an $r\sim\mathcal O(1)$ such that $\lim_{N\to\infty}\sigma_k=\infty$ for $k=1,\ldots, r$. In addition, $\sigma_k\le \mathcal O(1)$ for $k>r$. }

{
We additionally need to specify what we mean by ``low-dimensional'' and ``high-dimensional'' dynamics. Let $\lambda_k$ be the $k$th largest eigenvalue of the covariance matrix of $\vvec z(t)$, equivalently the $k$th largest singular value since covariance matrices are symmetric and positive definite. Note that $\lambda_k$ is a non-negative real number representing the amount of variance explained by the $k$th principal component of $\vvec z(t)$. Intuitively, $\vvec z(t)$ is low-dimensional whenever a large portion of the variance is explained by a small number of singular values. To make this precise, we say that the dynamics of $\vvec z(t)$ are low-dimensional whenever there is an $r\sim\mathcal O(1)$ such that $\lim_{N\to\infty}\lambda_j/\lambda_k=0$ whenever $j>r$ and $k\le r$. 
}

\section{Conditions for high-dimensional dynamics in response to stationary, stochastic perturbations.}\label{SStochasticAnalysis}

We now consider conditions for the emergence of high-dimensional dynamics. We assume that $\vvec x(t)$ is a stationary, ergodic stochastic process. Define the cross-spectral matrix, $\widetilde C^{\vvec x}(f)$, of $\vvec x(t)$ at frequency $f$ as the Fourier transform of the matrix of cross-covariance matrix,
\[
\widetilde C^{\vvec x}_{jk}(f)=\int_{-\infty}^\infty C_{jk}^{\vvec x}(\tau)e^{-2\pi i f \tau}d\tau
\]
where $C_{jk}^{\vvec x}(\tau)=\cov(\vvec x_j(t),\vvec x_k(t+\tau))$ is the stationary cross-covariance. 
When each $\vvec x_j(t)$ is i.i.d., then $\widetilde C^{\vvec x}(f)=I c_x(f)$ is a multiple of the identity matrix where $ c_x(f)$ is the power spectral density of each $\vvec x_j(t)$. The assumption of i.i.d. elements is true for all of the examples we consider, but we will not apply this simplification until later in our calculation.

The the cross-spectral density of the network response, $\vvec z(t)$, is defined analogously and it can be derived under the dynamics in Eq.~\eqref{Edzdtgen} to get~\cite{gardiner1985handbook,trousdale2012impact,rosenbaum2017spatial,baker2019correlated,baker2020inference}
\begin{equation}\label{ECz1d}
\begin{aligned}
\widetilde C^{\vvec z}(f) 
&= [r(f)I -W]^{-1} \widetilde C^{\vvec x}(f)[r(f)I -W]^{-*}
\end{aligned}
\end{equation}
where 
\[
r(f)=1-2\pi \tau f  i
\] 
is scalar.

Eq. \eqref{ECz1d} quantifies the covariance structure of $\vvec z(t)$ at any given frequency mode, $f$, but we are often specifically interested in the zero-lag temporal covariance, 
\begin{equation}\label{EcovInt}
\overline C^{\vvec z}_{jk}=\cov(\vvec z_j(t),\vvec z_k(t))=\int_{-\infty}^\infty \widetilde C^{\vvec z}_{jk}(f)df.
\end{equation}
From Eq.~\eqref{ECz1d}, we therefore have
\begin{equation}\label{ECbar0}
\overline C^{\vvec z}= \int_{-\infty}^\infty [r(f)I -W]^{-1} \widetilde C^{\vvec x}(f)[r(f)I -W]^{-*}df
\end{equation}

If the timescale of fluctuations in $\vvec x(t)$ are much slower than the timescale, $\tau$, of network dynamics then $\widetilde C^{\vvec x}(f)\approx 0$ at frequencies much higher than $1/(2\pi \tau)$ (see Figure~\ref{SuppFigPSD} in Appendix~\ref{AppSuppFigures}) . Indeed, we can take this to be the definition of the statement that the fluctuations in $\vvec x(t)$ are much slower than $\tau$. Since $r(f)\approx 1$ whenever $f\ll 1/(2\pi \tau)$, we therefore have that $r(f)\approx 1$ whenever $\widetilde C^{\vvec x}(f)$ is not close to zero (see Figure~\ref{SuppFigPSD} in Appendix~\ref{AppSuppFigures}). 
As a result, the only parts of the integrand that contribute to the integral in Eq.~\eqref{ECbar0} are the low frequency components, $f\approx 0$. In this case, we can replace $r(f)$ with $1$ to obtain  
\begin{equation}\label{ECzbar}
\overline C^{\vvec z}\approx [I -W]^{-1}\overline C^{\vvec x} [I -W]^{-T}.
\end{equation}
Note that Eq.~\eqref{ECzbar} can also be derived directly from the quasi-steady state approximation $\vvec z(t)\approx [I-W]^{-1}\vvec x(t)$. Our assumption above that $\widetilde C^{\vvec x}(f)\approx 0$ for $f>1/\tau$ is essentially equivalent to the assumption that the quasi-steady state approximation is accurate. Either approach can be used to derive Eq.~\eqref{ECzbar}. In Appendix~\ref{Sfast} below, we provide more details about the frequency dependence of variability and how it relates to the speed at which $\vvec x(t)$ varies.

When principal component analysis is applied to $\vvec z(t)$, the variance explained by each principal component is given by the ordered list of eigenvalues of the covariance matrix, $\overline C^{\vvec z}$. From Eq.~\eqref{ECzbar}, we see that these eigenvalues are proportional to the squares of the singular values of the matrix 
\[
R=[I -W]^{-1}\sqrt{\overline C^{\vvec x}}.
\]
where $\sqrt{\overline C^{\vvec x}}$ is the matrix square root of the symmetric positive definite covariance matrix, $\overline C^{\vvec x}$ (distinct from the entry-wise square root in general).   
Therefore, the decay of the variance explained by each principal component of $\vvec z$ (as in the blue dots in Figure~\ref{F1}d) are described by the squared singular values of $R$. 

In the models we consider, each $\vvec x_j(t)$ is an i.i.d. stochastic process, so $\overline C^{\vvec x}=vI$ is a multiple of the identity where $v=\var(\vvec x_j(t))$ is the stationary covariance of $\vvec x_j(t)$. Therefore, for our models, 
\[
R=[I-W]^{-1}\sqrt v
\]
and 
\begin{equation*}
\overline C^{\vvec z}\approx [I -W]^{-1}[I -W]^{-T}v
\end{equation*}
where recall that $v>0$ is a scalar. 

In more general classes of models in which $\overline C^{\vvec x}$is not a multiple of the identity matrix, the dimensionality of $\vvec z(t)$ might be reduced when $\overline C^{\vvec x}$ is effectively low-rank (see  Figure~\ref{SuppFigLowDimInput} in Appendix~\ref{AppSuppFigures}). However, in this situation, low-dimensional dynamics are inherited from the input, not generated by interactions within the network. By restricting to high-dimensional $\vvec x(t)$, we are focused on the question of whether low-dimensional dynamics arise from network interactions.

In conclusion, when $\vvec x(t)$ is high-dimensional, the dimensionality of the dynamics of $\vvec z(t)$ is determined by the effective rank of the matrix 
\[
A=[I-W]^{-1}.
\]
More specifically, the variance explained by each principal component of $\vvec z(t)$ is proportional to the square of the singular values of $A$,
\[
\textrm{var. explained by $k$th PC of }\vvec z(t)\approx \sigma^2_{A,k}\,v
\]
where $\sigma_{A,k}$ is the $k$th singular value of $A$ (assuming singular values and principal components are sorted in decreasing order) and $v=\var(\vvec x_j(t))$ is the stationary variance of each $\vvec x_j(t)$. 

Therefore, $\vvec z(t)$ is low-dimensional whenever $A$ has a small number of large singular values. As above, we can use the fact that singular values commute with matrix inverses to write our conclusions in terms of 
\[
Q=I-W.
\]
Specifically, 
\begin{equation}\label{EkPC}
\textrm{var. explained by $k$th PC of }\vvec z(t)\approx \frac{v}{\sigma^2_{Q,N-k}}
\end{equation}
where $\sigma_{Q,N-k}$ is the $(N-k)$th singular value of $Q$, i.e., the $k$th from the last singular value.

To demonstrate these analytical results, we repeated the simulation from Figure~\ref{F1}d and added the predictions from Eq.~\eqref{EkPC} as red dots (Figure~\ref{SuppFigTheory}a in Appendix~\ref{AppSuppFigures}). Surprisingly, the theory did not closely match the simulations. We suspected that this was due to finite sampling: The simulation was performed over the time interval $t\in[0,T]$ where $T=5\times 10^3$ (for comparison, $\tau=1$ and the correlation timescale of $\vvec x(t)$ was $\tau_x=10$). We suspected that Eq.~\eqref{ECzbar} would be accurate when $\overline{C}^\vvec{x}$ is replace by the empirical covariance matrix of $\vvec x(t)$. Under this substitution, Eq.~\eqref{EkPC} would be replaced by 
\[
\textrm{var. explained by $k$th PC of }\vvec z(t)\approx \frac{1}{\sigma^2_{ \hat B,N-k}}
\]
where
\[
\hat B=[I-W]\sqrt{{\hat C}^\vvec{x}}^{-1}
\]
and ${\hat C}^\vvec{x}$ is the the empirical covariance matrix. 
Or, equivalently and more simply, 
\begin{equation}\label{EkR}
\textrm{var. explained by $k$th PC of }\vvec z(t)\approx \sigma^2_{\hat R,k}
\end{equation}
where
\[
\hat R=[I-W]^{-1}\sqrt{{\hat C}^\vvec{x}}
\]
is the sampled value of $R$. Using Eq.~\eqref{EkR} gives a  more accurate prediction (Figure~\ref{SuppFigTheory}a  in Appendix~\ref{AppSuppFigures}, green dots). This confirms that the error in the red dots from  Figure~\ref{SuppFigTheory}a is due largely to under-sampling of $\vvec x(t)$. We next increased the simulation time ten-fold to $T=5\times 10^4$. In this case, the original Eq.~\eqref{EkPC} was more accurate (Figure~\ref{SuppFigTheory}b  in Appendix~\ref{AppSuppFigures}, red dots), further confirming that the errors in   Figure~\ref{SuppFigTheory}a are due largely to sampling error.

As concluded previously, $\vvec z(t)$ exhibits low-rank suppression (as demonstrated by the last blue dot in Figure~\ref{F1}d) whenever $Q$ has a small number of asymptotically large singular values (equivalently, whenever $A$ has a small number of asymptotically small singular values). Now, we may conclude that $\vvec z(t)$ has low-dimensional dynamics whenever $Q$ has some asymptotically \textit{small} singular values (equivalently, whenever $A$ has a small number of asymptotically large singular values). 
Conversely, \textit{high-dimensional} dynamics can only occur whenever $Q$ does not have any asymptotically small singular values (equivalently, whenever $A$ does not have any asymptotically large singular values). 
We can summarize as follows
\begin{quotation}
\noindent Low-rank suppression (as in Figure~\ref{F1}g) occurs whenever $Q=I-W$ has at least one asymptotically large singular value.

\noindent High-dimensional dynamics (as in Figure~\ref{F1}d) can only occur whenever $Q=I-W$ lacks any asymptotically small singular values.
\end{quotation}

Previously, in Section~\ref{SLDD}, we effectively showed that $Q=I-W$ has large singular values (and therefore low-rank suppression occurs) whenever $W$ is strongly low-rank. Conditions under which $Q=I-W$ \textit{lacks small singular values} (and therefore high-dimensional dynamics occur) are not so simple. Specifically, the lack or presence of small singular values depends on the recurrent alignment matrix,
\[
P=V^TU,
\]
which measures the alignment between the left and right singular vectors, $\vvec u_k$ and $\vvec v_k$. Specifically, $P_{jk}=\vvec v_j\cdot \vvec u_k$ so that $|P_{jk}|=1$ whenever $\vvec v_k=\pm \vvec u_k$ and $P_{jk}=0$ whenever $\vvec v_k$ is orthogonal to $\vvec u_k$. Note that singular values of $P$ are bounded by unity, $\sigma_P\le 1$. We next show that if all singular values of $P$ are $\mathcal O(1)$ then the network exhibits high-dimensional responses to  high-dimensional inputs. 

\setcounter{claim}{0}
\begin{claim}
Under the model and assumptions 1--3 in the main text, if $P=V^TU$ does not have any asymptotically small singular values then 
the dynamics of $\vvec z(t)$ are high-dimensional. 
\end{claim}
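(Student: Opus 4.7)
By the derivation preceding the claim, the dynamics of $\vvec z(t)$ are high-dimensional precisely when $Q = I - W$ has no asymptotically small singular values. The goal is therefore to show that $\sigma_{\min}(I - W)$ is bounded below by an $N$-independent positive constant asymptotically, whenever $\sigma_{\min}(P) = \Omega(1)$. The idea is to reduce the question about the $N \times N$ matrix $I - W$ to a question about an $r \times r$ matrix closely related to $P$, via the Sherman--Morrison--Woodbury identity.

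\textbf{Woodbury reduction from $N$ to $r$.} Let $B = I - W_1$. Under Assumption 1, the spectrum of $W$ is bounded away from $1$, and for the random ensembles under consideration this also controls $\sigma_{\min}(B)$, so $\|B^{-1}\|$ is bounded. Applying Woodbury to $I - W = B - U\Sigma V^T$ gives
\begin{equation*}
(I - W)^{-1} \;=\; B^{-1} \;+\; B^{-1} U\bigl(\Sigma^{-1} - \widetilde P\bigr)^{-1} V^T B^{-1},
\end{equation*}
where $\widetilde P := V^T B^{-1} U$ is $r \times r$. Under Assumption 2, $\Sigma^{-1} \to 0$ entrywise, so $(\Sigma^{-1} - \widetilde P)^{-1}$ is asymptotically bounded in norm if and only if $\widetilde P$ has singular values bounded away from zero. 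In that case every factor on the right is of order $1$, whence $\|(I - W)^{-1}\| = \mathcal{O}(1)$ and therefore $\sigma_{\min}(I - W) = \Omega(1)$, establishing the claim.

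\textbf{From $P$ to $\widetilde P$.} The problem thus reduces to showing that the well-conditioning of $P = V^T U$ is inherited by $\widetilde P = V^T (I - W_1)^{-1} U$. Because $W_1$ is independent of $W_0$ (and thus of $U, V$), the resolvent $(I - W_1)^{-1}$ concentrates in the large-$N$ limit. For the canonical i.i.d.\ Gaussian model with $1/\sqrt N$ scaling, rotational invariance and standard Stieltjes-transform arguments give
\begin{equation*}
V^T (I - W_1)^{-1} U \;\longrightarrow\; \kappa\, P
\end{equation*}
for a scalar $\kappa \neq 0$ determined by the spectral law of $W_1$. Hence $\sigma_{\min}(\widetilde P) \to |\kappa|\,\sigma_{\min}(P) > 0$, which combined with the Woodbury bound above yields the claim.

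\textbf{Main obstacle.} The hardest step is justifying the concentration $\widetilde P \to \kappa P$ under the broadest possible hypotheses on $W_1$; for rotationally invariant ensembles this is classical, but extending it to structured or sparse $W_1$ requires a more delicate argument. A complementary route that avoids random matrix theory altogether is a Neumann expansion valid whenever $\|W_1\| < 1$,
\begin{equation*}
\widetilde P \;=\; P \;+\; V^T W_1 U \;+\; V^T W_1^2 U \;+\; \cdots,
\end{equation*}
which yields $\widetilde P = P + \mathcal{O}(\|W_1\|)$ and directly transfers conditioning from $P$ to $\widetilde P$ provided the perturbation is smaller than $\sigma_{\min}(P)$.
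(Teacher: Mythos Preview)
Your approach is correct and takes a genuinely different route from the paper's. The paper argues the contrapositive: starting from an approximate unit null vector $\vvec z$ of $I-W$, it projects the relation $W_0\vvec z + W_1\vvec z \approx \vvec z$ onto $\textrm{col}(U)$, argues that $U^T W_1\vvec z = o(1)$ and $\vvec z \approx UU^T\vvec z$ by independence of $W_1$ from $W_0$, and thereby produces $\vvec y = U^T\vvec z$ satisfying $\Sigma P\vvec y \approx \vvec y$, whence $\|P\vvec y\| \approx \|\Sigma^{-1}\vvec y\| = o(1)$. You instead bound $\|(I-W)^{-1}\|$ directly via Woodbury, collapsing the $N\times N$ question to conditioning of the $r\times r$ matrix $\widetilde P = V^T(I-W_1)^{-1}U$, and then transfer conditioning from $P$ to $\widetilde P$ by isotropic concentration of the resolvent (indeed $\kappa = 1$ here). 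Your route is more structural and yields an explicit formula for $(I-W)^{-1}$ that could be reused for finer estimates; the paper's avoids Woodbury and resolvent laws, though its key step $\|U^T W_1\vvec z\| = o(1)$ is itself heuristic since $\vvec z$ depends on $W_1$.

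Two points to tighten. First, Assumption~1 bounds the eigenvalues of $W$, not the singular values of $I-W_1$; boundedness of $\|B^{-1}\|$ is a separate (true) random-matrix fact about the pseudospectrum of $W_1$ when its spectral radius satisfies $\rho<1$, and should be stated as an independent input rather than deduced from Assumption~1. Second, your Neumann alternative as written gives only $\widetilde P = P + \mathcal O(\|W_1\|)$, which is not sharp enough since $\|W_1\|\sim 2\rho$ is $\mathcal O(1)$ and may exceed $\sigma_{\min}(P)\le 1$; the useful statement is that each term $V^T W_1^k U$ is individually $o(1)$ by independence of $W_1$ from $U,V$---which is exactly the concentration you already invoked in your primary argument, not a crude operator-norm bound.
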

\begin{proof}
\noindent We will prove this claim by proving its contrapositive:
\begin{quotation}
\noindent If the dynamics of $\vvec z(t)$ are low-dimensional then $P=V^TU$ has at least one asymptotically small singular value. 
\end{quotation}
The dynamics of $\vvec z(t)$ are {low} dimensional whenever $\vvec z(t)$ has a small number of dominant principal components as in Figure~\ref{F2}d  (conversely, $\vvec z(t)$ is high-dimensional whenever there is no such dominant principal component, as in Figure~\ref{F1}d). From the discussion above, we know that $\vvec z(t)$ is {low} dimensional whenever $A=[I-W]^{-1}$ has at least one asymptotically large singular value or, equivalently, whenever $Q=I-W$ has an asymptotically small singular value. Therefore, our original claim is equivalent to the following:
\begin{quotation}
\noindent If $Q=I-W$  has at least one asymptotically small singular value then $P=V^TU$ also has at least one asymptotically small singular value.
\end{quotation}
We will prove this version of the claim directly. 
Assume that $Q$ has an asymptotically small singular value. Then there is a $\vvec z$ satisfying $\|\vvec z\|=1$ and
\[
(I-W)\vvec z=o(1)
\]
where the notation $o(1)$ means that $\|(I-W)\vvec z\|\to 0$ as $N\to\infty$. 
It is sufficient to show that there is a $\vvec y$ with $\|\vvec y\|=1+o(1)$ satisfying 
\[
\|P\vvec y\|=o(1).
\]
We have that
\[
W_0\vvec z+W_1\vvec z=\vvec z+o(1)
\]
Multiplying both sides on the left by $U^T$ gives 
\[
\Sigma V^T\vvec z+U^TW_1\vvec z=U^T\vvec z+o(1)
\]
Since $W_1$ is random and independent from $W_0$ and $U$, and since $U^T$ projects from $N$ to $r\ll N$ dimensions, the term $U^TW_1\vvec z$ represents the projection of a random vector onto the low-dimensional column space of $U$, so $\|U^TW_1\vvec z\|=o(1)$. We therefore have
\[
\Sigma V^T\vvec z=U^T\vvec z+o(1).
\]
We next claim that $\vvec z=UU^T\vvec z+o(1)$. To see why this is true, note that $UU^T\vvec z$ is the orthogonal projection of $\vvec z$ onto the column space of $W_0$. Since $W_0$ dominates $W=W_0+W_1$ (and therefore the the column space of $W$ is dominated by that of $W_0$)  and $W\vvec z\approx \vvec z$, we may conclude that $\vvec z$ lies predominantly in the column space of $W$ and therefore of $W_0$. In other words, $\vvec z=UU^T\vvec z+o(1)$.  
Hence, we can rewrite the equality above as
\[
\Sigma V^T UU^T\vvec z=U^T\vvec z+o(1).
\]
which reduces to
\[
\Sigma P\vvec y=\vvec y+o(1).
\]
where $P=V^TU$ and $\vvec y = U^T\vvec z$. Note again that $\|\vvec y\|=\|U^T\vvec z\|=1+o(1)$ since $\|\vvec z\|=1$ and $\vvec z=UU^T\vvec z+o(1)$.  We then have that 
\[
\|P\vvec y\|=\|\Sigma^{-1}\vvec y\|+o(1)=o(1)
\]
because $\Sigma^{-1}$ is a diagonal matrix with $o(1)$ terms on the diagonal. 
\end{proof}

The proof of Claim~1 also tells us the dominant directions of variability whenever dynamics are \textit{low-dimensional} as in Figure~\ref{F2} and Figure~\ref{F3}g. {Since $\vvec z$ is approximately aligned to the $U$ in the proof, we may conclude that low-dimensional dynamics are caused by excess variability along the column space of $U$, \textit{i.e.}, the column space of $W_0$.  This explains the observations in Figure~\ref{F2}e. More specifically, low-dimensional dynamics arise when external input and the random part of internal input (\textit{i.e.}, $W_1\vvec z$) have variability that overlap with the column space of $V$. Because these right singular vectors are associated with large singular values of $W$, this  variability gets mapped to strong variability in the column space of $U$. Since $P$ has small singular values, this variability in the column space of $U$ does not overlap with the column space of $V$, so it cannot be cancelled by recurrent interactions. This creates an effectively feedforward mapping that amplifies variability along the directions $\vvec v_k$ and maps it to variability in the directions of $\vvec u_k$. This mechanism is a form of non-normal amplification~\cite{trefethen1991pseudospectra,murphy2009balanced,hennequin2012non} (see the discussion in Section~\ref{SLDD} for a more in-depth comparison).}

{These results can also help us understand some subtleties of low-rank suppression. Our theory from Section~\ref{SLRS} shows that the input $\vvec x_a=\vvec u_k+(W_1\vvec v_k+\vvec v_k)/\sigma_k$ is suppressed. Since $\sigma_k$ is large, this input is close to $\vvec u_k$, but has a correction term $\vvec x_c=(W_1\vvec v_k+\vvec v_k)/\sigma_k$. In Figure~\ref{F1}g, the ``aligned'' input that produced a suppressed response was chosen simply as $\vvec x=\vvec u$, without the correction term (using $\vvec x_a$ with the correction term also produced a suppressed response; not pictured). For the example in Figure~\ref{F2}, we used $\vvec x=\vvec x_a$ with the correction term because using $\vvec x=\vvec u$ without a correction term did not produce a suppressed response. To understand why this happens, note that networks like the one in Figure~\ref{F2} (for which $P=V^TU$ is singular or has a small singular value) take inputs from the directions $\vvec v_k$ and amplify them in proportion to $\sigma_k$ (according to our proof of Claim~1). Hence, the $\vvec v_k/\sigma_k$ term in $\vvec x_a$ is small by a factor of $\sigma_k$, but the response is also amplified by a factor of $\sigma_k$, so the correction term produces a $\mathcal O(1)$ response and cannot be ignored. In summary, the correction term, $\vvec x_c$, is necessary to include when $P$ is singular or has small singular values (as in Figure~\ref{F2}).}


{
\section{Dimensionality of dynamics when inputs are not slowly varying.}\label{Sfast}
}

{
In Section~\ref{SLDD} and Appendix~\ref{SStochasticAnalysis}, we assumed that $\vvec x(t)$ varied on a slower timescale than the timescale, $\tau$, of the network dynamics, justifying the quasi-steady state approximation in Eq.~\eqref{Equasi}. While this assumption simplifies our analysis and conclusions, we can understand the dimensionality of dynamics without it. Here, we analyze the dimensionality of $\vvec z(t)$ under all of the conditions from Assumptions 1--3 except the assumption that $\vvec x(t)$ is slowly varying. 
}

{
Beginning with Eq.~\eqref{ECbar0}, we see that the dimensionality of $\vvec z(t)$ in general depends on the interaction between $\widetilde C^{\vvec x}(f)$, $W$ and $r(f)$ across different frequencies, $f$. Here, we again focus on the special case that each $\vvec x_j(t)$ is i.i.d. In this case, $\widetilde C^{\vvec x}(f)=c_x(f)I$ where $c_x(f)$ is the scalar power spectral density of each $\vvec x_j(t)$. Therefore, the covariance matrix of $\vvec z(t)$ can be written as~\cite{gardiner1985handbook}
\begin{equation}\label{ECzfint}
\overline C^{\vvec z}=\int_{-\infty}^\infty [r(f) I-W]^{-1}[r(f)I-W]^{-*} c_x(f)df
\end{equation}
where $\cdot^{-*}$ is the inverse conjugate transpose and $r(f)=1-2\pi \tau f i $.} 

{
First consider $\widetilde C^{\vvec z}(f)$ at frequencies lower than the timescale of the dynamics, $f\ll 1/(2\pi \tau)$. At these frequencies, we have that $r(f)\approx 1$ so that 
\[
\widetilde C^{\vvec z}(f)\approx [I-W]^{-1}[I-W]^{-T}c_x(f)
\]
at sufficiently low frequencies. 
Since $c_x(f)$ is a scalar, we  have that 
\[
\overline C^{\vvec z}\propto [I-W]^{-1}[I-W]^{-T}
\]
for $f\ll 1/(2\pi \tau)$. Therefore, the arguments and conclusions from Section~\ref{SLDD}, Appendix~\ref{SStochasticAnalysis}, and Claim~1 apply in general to the dimensionality of dynamics at low frequencies. Specifically, the dimensionality of the dynamics at low frequencies (\textit{i.e.}, the slow dynamics) can only be low-dimensional when $P=V^TU$ has a small singular value. This is true even when $\vvec x(t)$  has power at high frequencies. 
}

{
However, the stationary covariance matrix of $\vvec z(t)$ accounts for variability at all frequencies (see Eqs.~\eqref{ECzfint}).
When $\vvec x(t)$ is slowly varying, $c_x(f)\approx 0$ at higher frequencies (see Figure~\ref{SuppFigPSD}), so the low-frequency analysis above is sufficient (consistent with the results obtained in Appendix~\ref{SStochasticAnalysis} and Section~\ref{SLDD}). 
When $\vvec x(t)$ has power at high frequencies, these higher frequencies need to be accounted for. 
}

{
At high frequencies satisfying $f\gg \|W\|/(2\pi \tau)$, the $r(f)I$ terms in Eq.~\eqref{ECzfint} dominates the $W$ terms so that 
\[
\widetilde C^{\vvec z}(f)\approx \frac{c_x(f)}{1+4\pi^2 f^2 \tau^2}I
\]
which is a multiple of the identity and therefore full-rank.  Hence, when $\vvec x(t)$ has variability at these high frequencies (\textit{i.e.}, when $c_x(f)$ is not close to zero for $f\gg \|W\|/(2\pi \tau)$; see Figure~\ref{SuppFigFastx}a), these high frequency components can only add high-dimensional variability to $\vvec z(t)$ because $\vvec z(t)$ tracks $\vvec x(t)$ at high frequencies. 
}

{
If $\vvec x(t)$ has variability \textit{only} at these high frequencies ($f\gg \|W\|/(2\pi \tau)$), then only these frequencies would contribute to the integral in Eq.~\eqref{ECzfint} and therefore $\vvec z(t)$ would be high-dimensional regardless of the structure of $W$. However, it is somewhat rare that natural stochastic processes have power only at higher frequencies. 
}

{
More commonly, when $\vvec x(t)$ varies quickly, it has power at both low and high frequencies. In other words, $c_x(f)$ is far from zero for both small and large $f$. For example, white noise has equal power at all frequencies and Orenstein Uhlenbeck processes have more power at low than high frequencies even when they have fast timescales. 
When $\vvec x(t)$ has power at low and high frequencies, the integral in Eq.~\eqref{ECzfint} has contributions from both low and high frequencies. Low-dimensional variability can only be introduced at low frequencies and, by Claim~1 and our arguments above, can only be introduced when $P=V^TU$ has a small singular value. Since these low-dimensional dynamics represent an asymptotically large singular value, this combination of low-dimensional variability at low frequencies and high-dimensional variability at high frequencies results in low-dimensional variability overall since the low-dimensional variability is strong while the high-dimensional variability is $\mathcal O(1)$. As a result, when $\vvec x(t)$ has power at both low and high frequencies, the conclusions of Claim~1 apply. Specifically, low-dimensional variability arises only when $P$ has small singular values. 
To test this conclusion, we repeated the simulations from Figure~\ref{F3} with timescales swapped. Specifically, we set $\tau=10$ and $\tau_x=1$ so that $c_x(f)$ is far from zero at both low frequencies ($f\ll 1/(2\pi \tau)$) and high frequencies ($f\gg \|W\|/(2\pi \tau)$; see Figure~\ref{SuppFigFastx}a  in Appendix~\ref{AppSuppFigures}). As predicted, low-dimensional variability only arises in the case that $P=V^TU$ has a singular value at zero (Figure~\ref{SuppFigFastx}b--f), identical to the case when $\vvec x(t)$ varies slowly (compare Figure~\ref{SuppFigFastx} to Figure~\ref{F3}).
}

~

\section{Additional Simulations and Data}\label{AppSuppFigures}

This appendix contains results from additional simulations and data. The simulations themselves and data are described in the respective captions and contextualized within the main text and Appendix.

 \begin{figure*}[h]
 \centering{
 \includegraphics[width=6.5in]{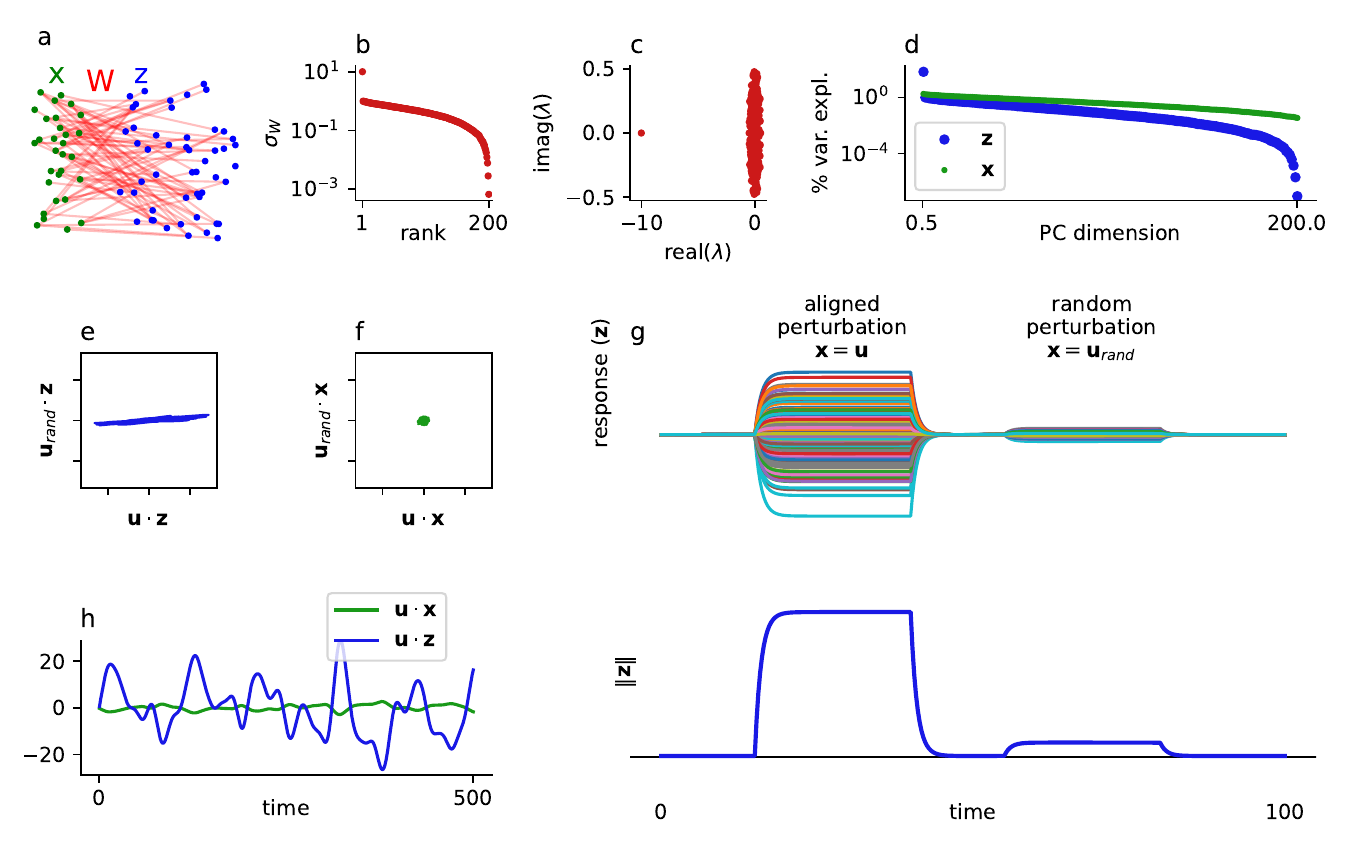}
 }
 \caption{{\bf Response properties of a feedforward network with rank-one structure.} Same as Figure~\ref{F1} except the recurrent network was replace by a feedforward network with dynamics satisfying $\tau d\vvec z/dt=-\vvec z'+W\vvec x$. Unlike the recurrent network in Figure~\ref{F1}, the feedforward network is most sensitive to inputs aligned with its low-rank structure, and its dynamics are dominated by one-dimensional variability. }
\label{SuppFigFfwd}
 \end{figure*}

 \begin{figure*}[h]
 \centering{
 \includegraphics[width=3in]{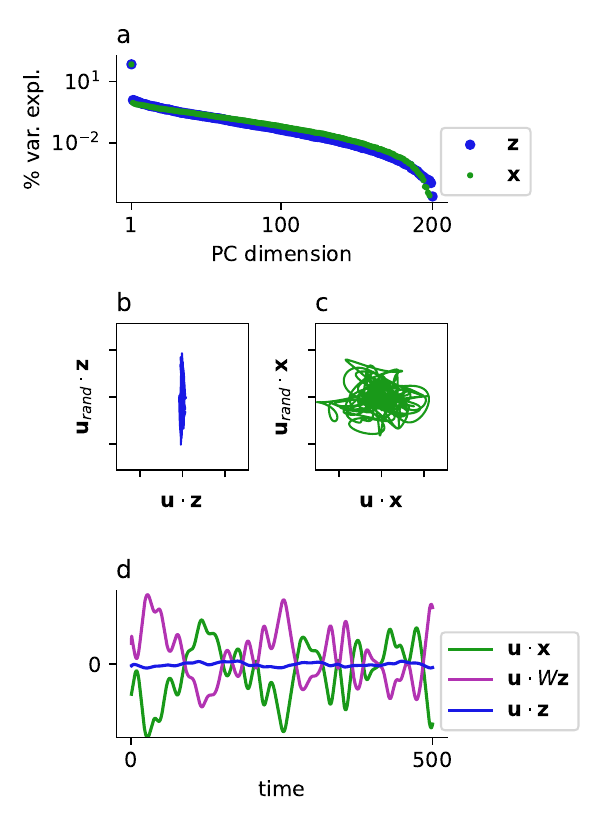}
 }
 \caption{{\bf Dynamics of a network with rank-one structure driven by an input perturbation with one dimensional structure.} Same as Figure~\ref{F1} except the dynamics obeyed $\tau d\vvec z/dt=-\vvec z+W\vvec z+W_x \boldsymbol \eta(t)$ where $\boldsymbol \eta(t)$ is a realization of the same Gaussian stochastic process used for $\vvec x(t)$ in Figure~\ref{F1}, and $W_x$ is an effectively low-rank matrix generated identically to, but independently from $W$. The ``effective input'', $\vvec x(t)=W_x \boldsymbol \eta(t)$, is therefore low-dimensional (see green dots in panel a). The network dynamics, $\vvec z(t)$, inherit low-dimensional dynamics from $\vvec x(t)$ (blue dots in panel a), but low-rank suppression and cancellation are still exhibited along the vector, $\vvec u$, defining the low-rank structure of $W$ (panels b--d).}
\label{SuppFigLowDimInput}
 \end{figure*}

 \begin{figure*}[h]
 \centering{
 \includegraphics[width=6.5in]{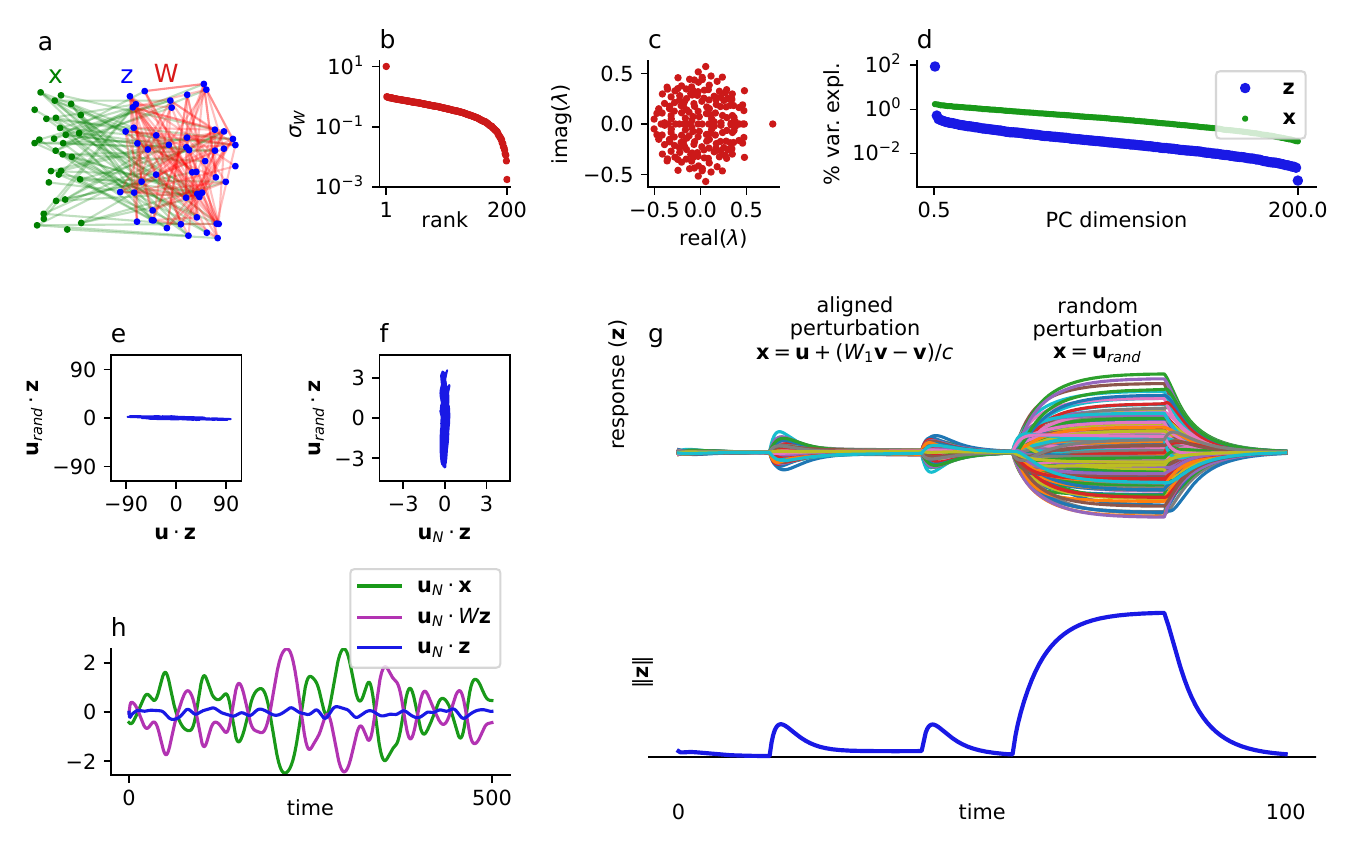}
 }
 \caption{{\bf Response properties of a network with independent  left  and right singular vectors.} Same as Figure~\ref{F2} except $W_0=c\vvec u\vvec v^T$ where $\vvec u$ and $\vvec v$ are independent random unit vectors. The network behaves similarly to the network in Figure~\ref{F2} because $\vvec u$ and $\vvec v$ are nearly orthogonal.}
\label{SuppFigNonNormal}
 \end{figure*}

 \begin{figure*}[h]
 \centering{
 \includegraphics[width=6.5in]{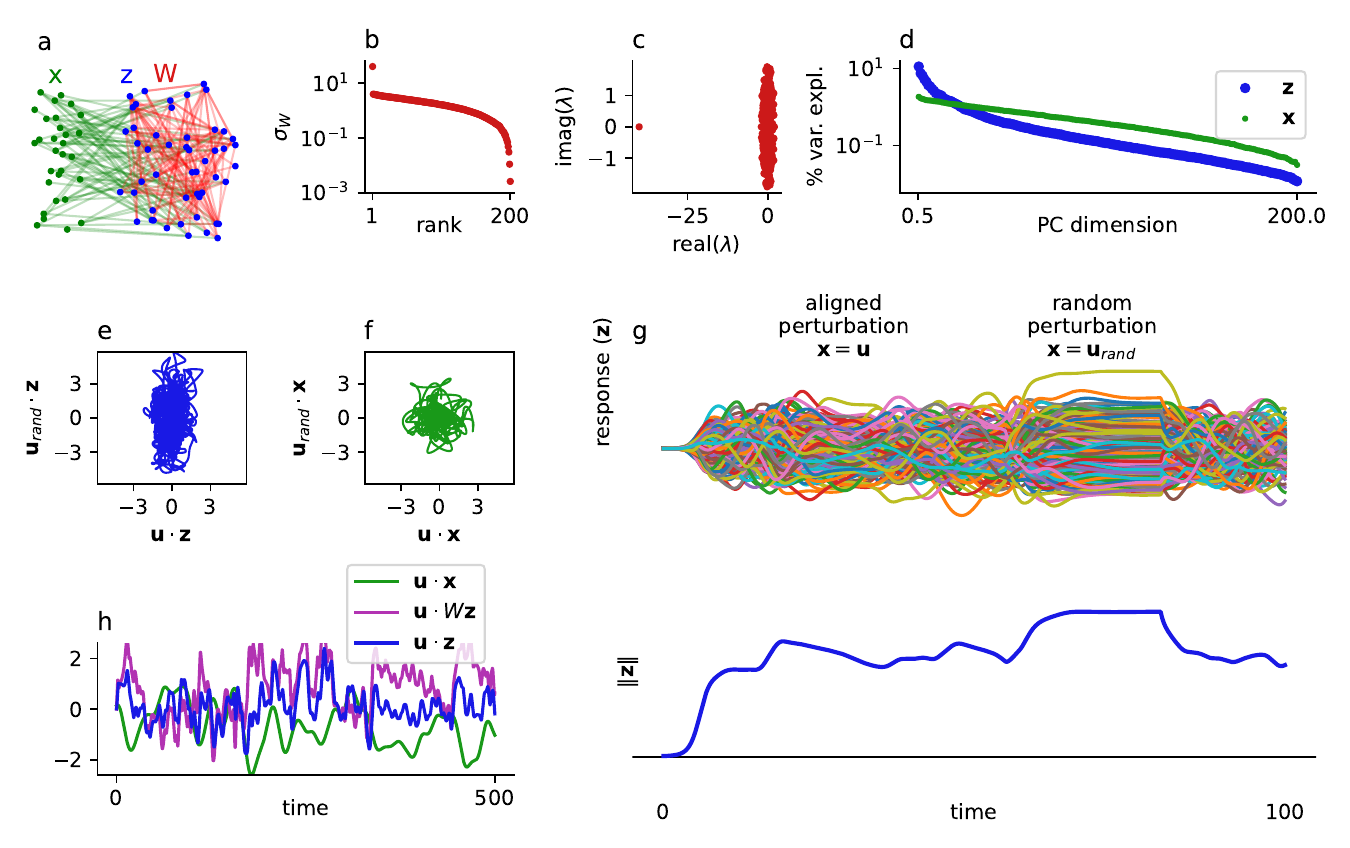}
 }
 \caption{{\bf Dynamics of an unstable, chaotic network with rank-one structure.} Same as Figure~\ref{F1} except $\rho=2$ and the magnitude of $c$ was increased by the same factor ($c=-40$) to maintain the same degree of low-rank structure (the same ratio between the spectral radius of the random part, $\rho$, and the magnitude of the low-rank part, $|c|$), and the dynamics obey Eq.~\eqref{EdzdtNonLin}. The instability produced by taking $\rho>1$ generates chaotic dynamics.}
\label{SuppFigUnstable}
 \end{figure*}

 \begin{figure*}[h]
 \centering{
 \includegraphics[width=6.5in]{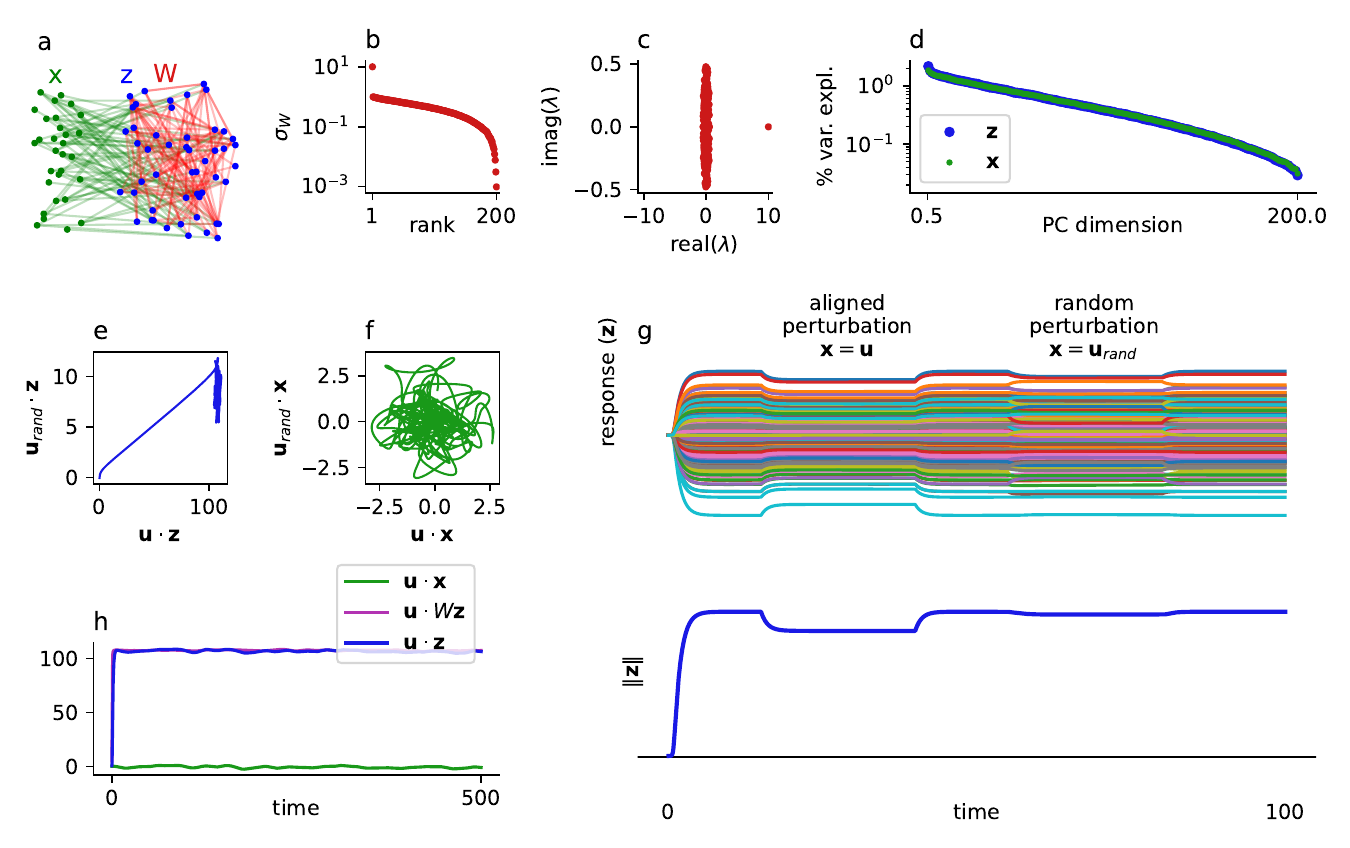}
 }
 \caption{{\bf Dynamics of an unstable, non-chaotic network with rank-one structure.} Same as Figure~\ref{F1} except $c=10$ and the dynamics obey Eq.~\eqref{EdzdtNonLin}. The fixed point at $\vvec z=0$ in unstable because $c>1$.}
\label{SuppFigUnstable2}
 \end{figure*}

 \begin{figure*}[h]
 \centering{
 \includegraphics[width=6.5in]{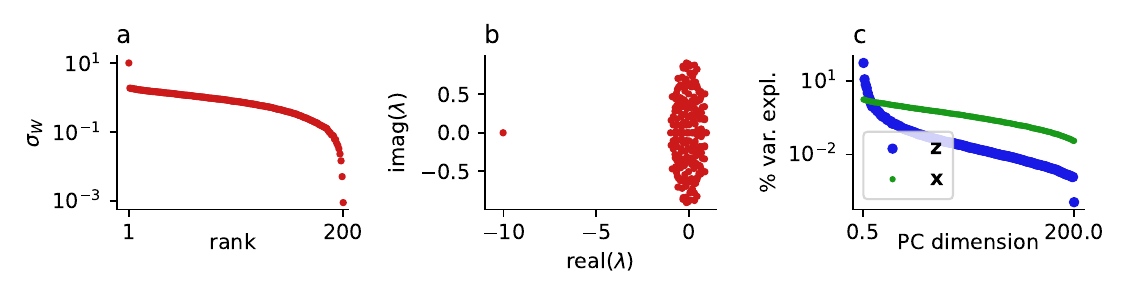}
 }
 \caption{{{\bf Dynamics of a stable network  with rank-one structure close to criticality.} Same as Figure~\ref{F1} except $\rho=0.95$. The fixed point at $\vvec z=0$ is stable, but close to instability because the maximum real part of the eigenvalues of the Jacobian is close to $0$. }}
\label{SuppFigCritical}
 \end{figure*}

 \begin{figure*}
 \centering{
 \includegraphics[width=3in]{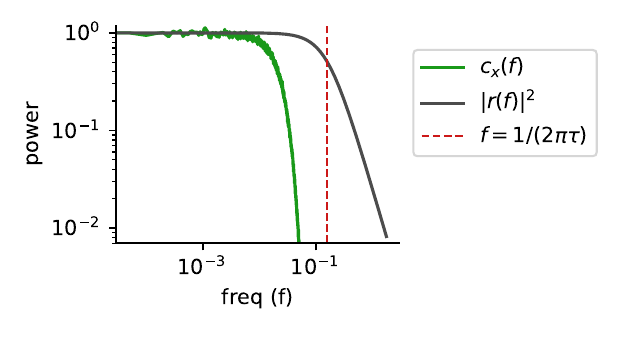}
 }
 \caption{{{\bf Power spectral density of $\vvec x(t)$ used in simulations.} Average power spectral density of $\vvec x_j(t)$ (green) compared to $|r(f)|^2=1/(1+4\pi^2 f^2 \tau^2)$ (gray) when $\tau_x=10$ and $\tau=1$, as in all figures except for Figure~\ref{SuppFigFastx}. Vertical red line shows $f=1/(2\pi \tau)$.}}
 \label{SuppFigPSD}
 \end{figure*}

 \begin{figure*}
 \centering{
 \includegraphics[width=5in]{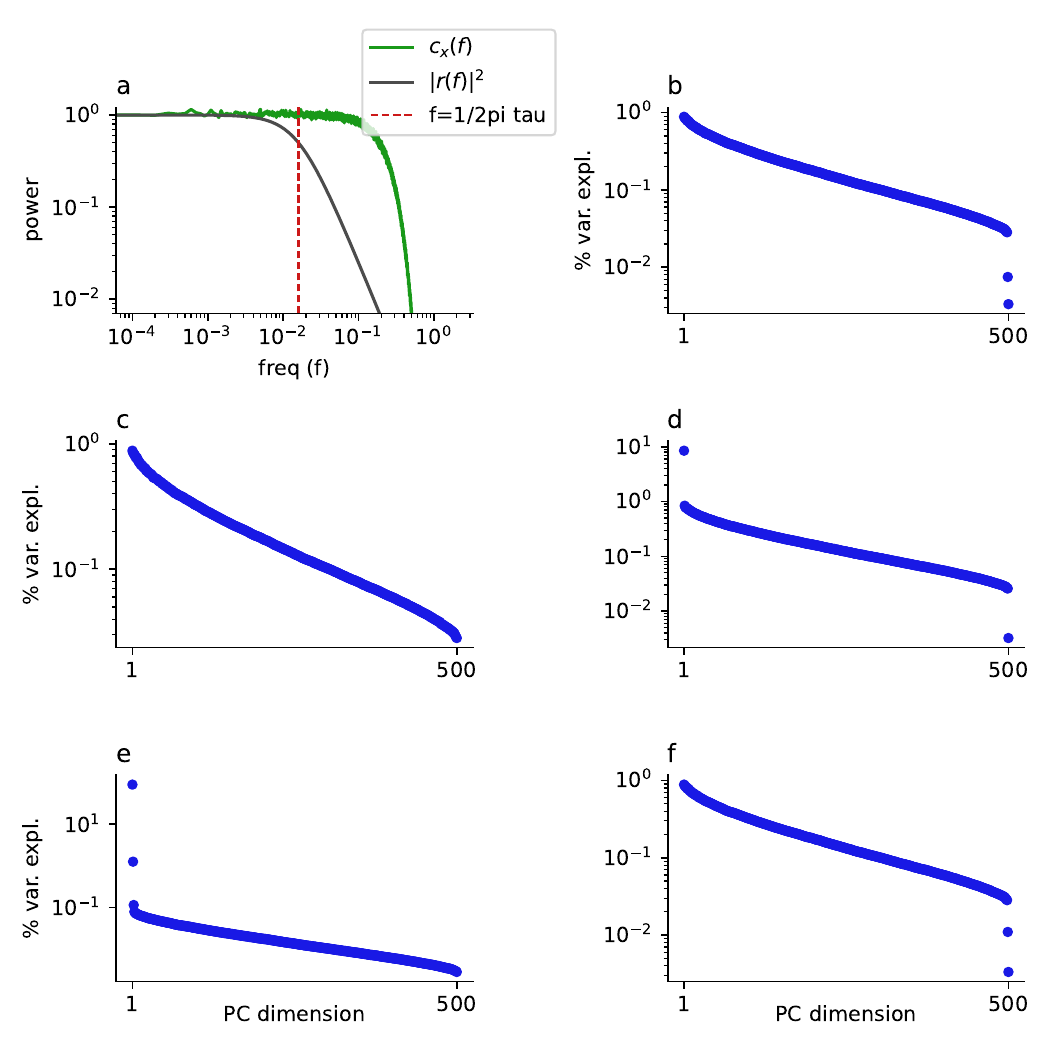}
 }
 \caption{{{\bf High- and low-dimensional dynamics when inputs are not slow.} Simulations from Figure~\ref{F3} were repeated with $\tau_x=1$ and $\tau=10$. {\bf a)} Average power spectral density of $\vvec x_j(t)$ (green) compared to $|r(f)|^2=1/(1+4\pi^2 f^2 \tau^2)$ (gray). Vertical red line shows $f=1/(2\pi \tau)$. Since $\vvec x(t)$ is faster than intrinsic network dynamics ($\tau_x<\tau$), $c_x(f)$ has power at higher frequencies where $r(f)\approx 0$. 
 {\bf b-e)} Same as Figure~\ref{F3} panels c,f,i,l,o respectively except that we set $\tau_x=1$ and $\tau=10$. Variability in $\vvec z(t)$ is high- and low-dimensional under the same conditions.}}
 \label{SuppFigFastx}
 \end{figure*}

 \begin{figure*}[h]
 \centering{
 \includegraphics[width=6.5in]{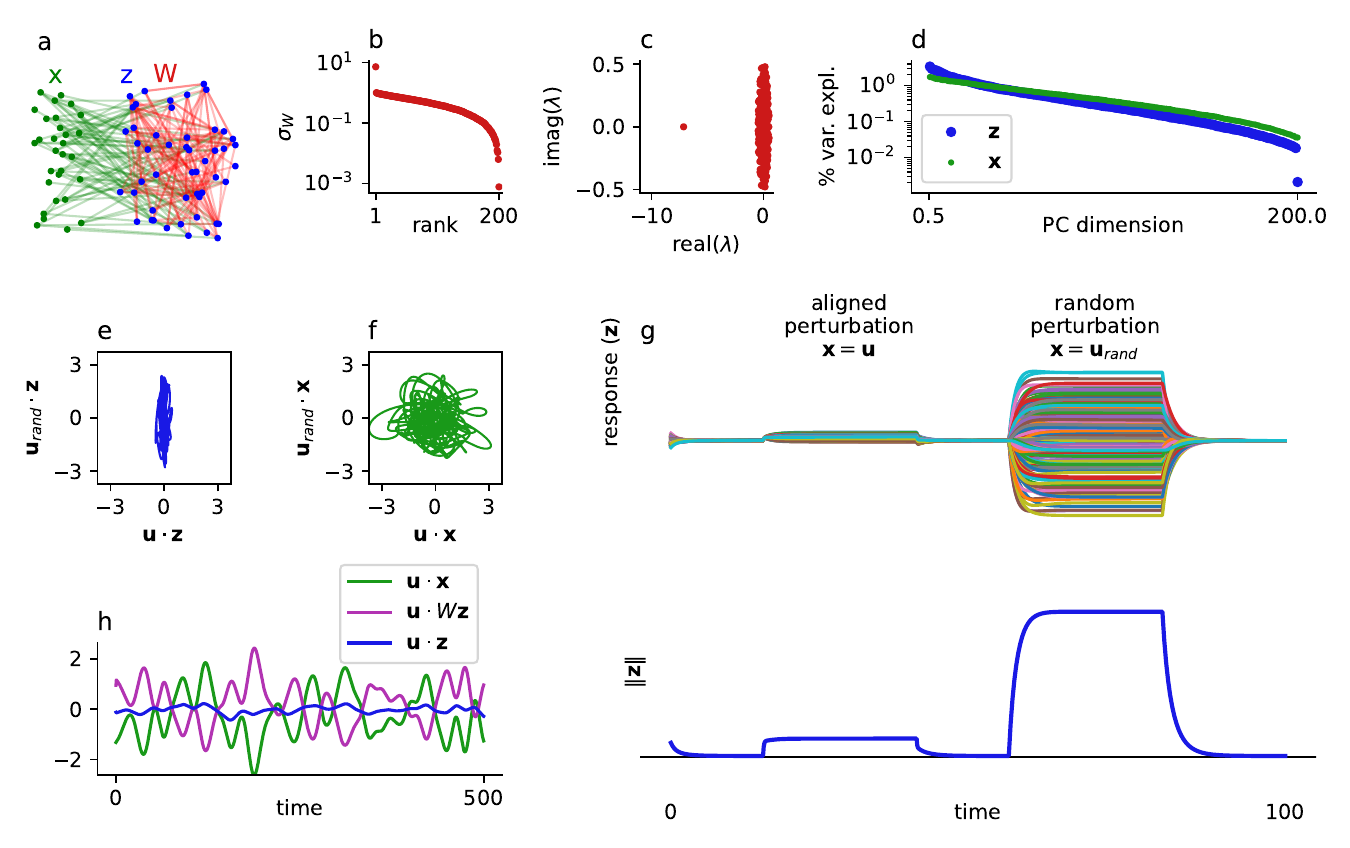}
 }
 \caption{{\bf Response properties of a network with biased weights.} Same as Figure~\ref{F1} except we used the dynamics in Eq.~\eqref{EdzdtNonLin}, and $W_{jk}$ are drawn i.i.d. from a normal distribution with mean $-0.5/\sqrt N$ and standard deviation $0.5/\sqrt N$ where $N=200$. }
\label{SuppFigBias}
 \end{figure*}

 \begin{figure*}[h]
 \centering{
\includegraphics[width=3in]{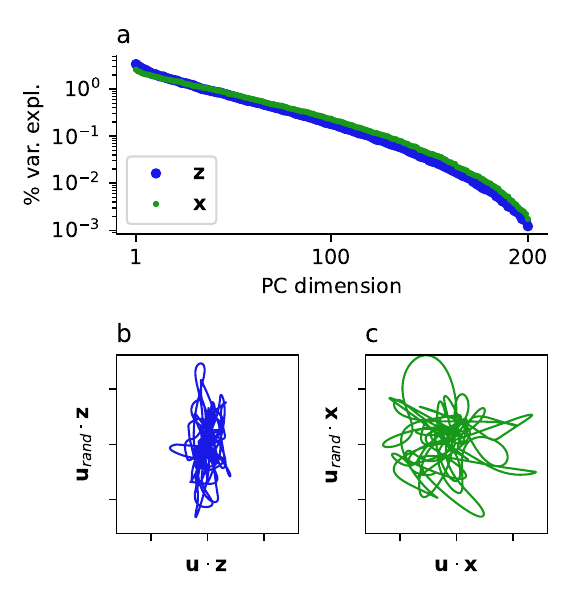}
 }
 \caption{{\bf Low-dimensional dynamics in a modular network.} Same as Figure~\ref{F1} except we used the dynamics in Eq.~\eqref{EdzdtNonLin} and  the modular network from Figure~\ref{F4}.}
\label{SuppFigModular}
 \end{figure*}

 \begin{figure*}[h]
 \centering{
\includegraphics[width=4in]{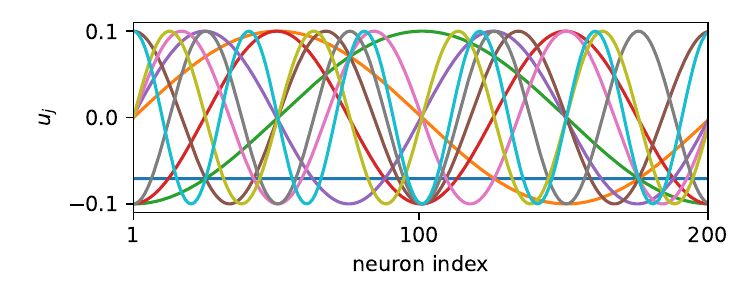}
 }
 \caption{{\bf Dominant singular vectors of a spatial network are Fourier modes.} The singular vectors corresponding to the ten largest singular values of the network in Figure~\ref{F5}.}
\label{SuppFigFourier}
 \end{figure*}

 \begin{figure*}[h]
 \centering{
 \includegraphics[width=3in]{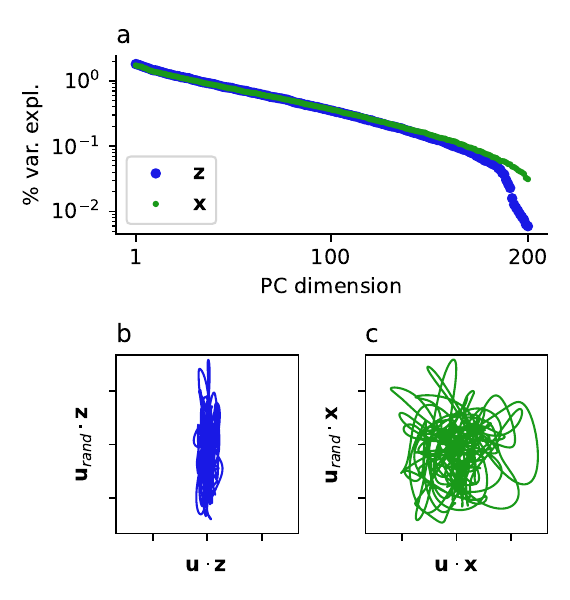}
 }
 \caption{{\bf Low-dimensional dynamics in a spatial network.} Same as Figure~\ref{F1} except we used the dynamics in Eq.~\eqref{EdzdtNonLin} and the spatial network from Figure~\ref{F5}.}
\label{SuppFigSpace}
 \end{figure*}

 \begin{figure*}[h]
 \centering{
 \includegraphics[width=4in]{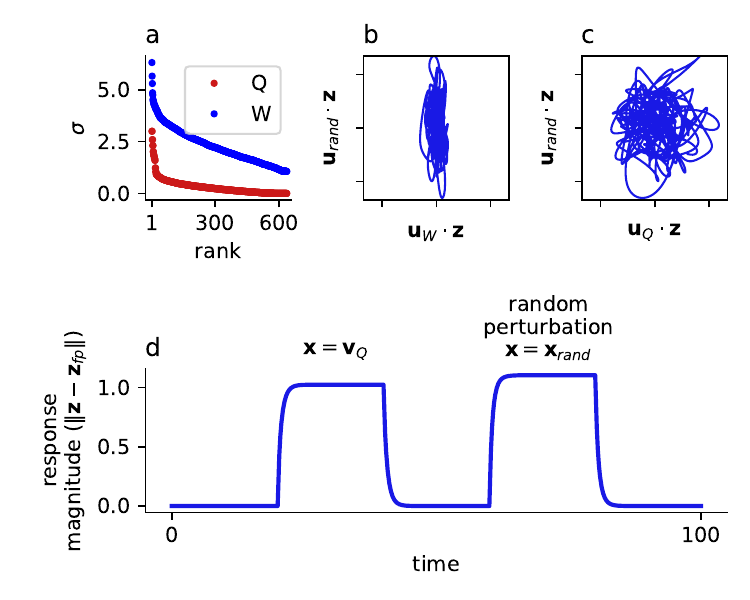}
 }
 \caption{{{\bf Singular values and dynamics in an epidemiological network.} {\bf a)} Singular values of $Q$ (red) and $W$ (blue). {\bf b)} Projections of $\vvec z$ for the model in Figure~\ref{F6}. {\bf c)} Same as Figure~\ref{F6}a except the dominant singular vector of $Q$ was used to define the aligned direction instead of the dominant singular value of $W$. The lack of suppression demonstrates the necessity of using the effective connectivity matrix, $W$, in place of $Q$, which is the connectivity matrix used to define the dynamics.}}
\label{SuppFigEpidemiological}
 \end{figure*}

 \begin{figure*}[h]
 \centering{
 \includegraphics[width=6.5in]{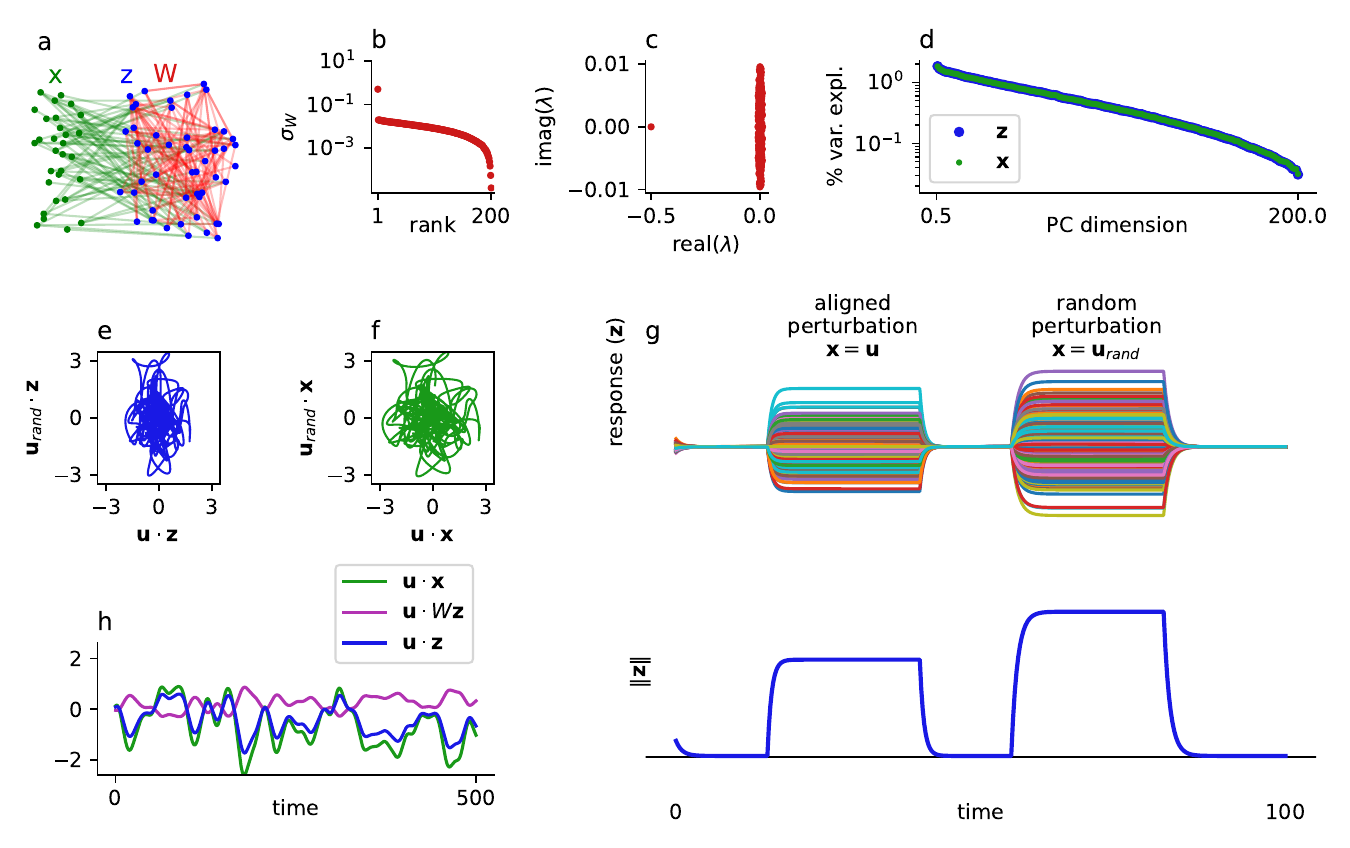}
 }
 \caption{{\bf Response properties of a weakly low-rank  network with $\rho\ll 1$ and  $c<0$.} Same as Figure~\ref{F1} except $c=-0.5$ and $\rho=0.01$. }
\label{SuppFigWeak}
 \end{figure*}

 \begin{figure*}[h]
 \centering{
 \includegraphics[width=6.5in]{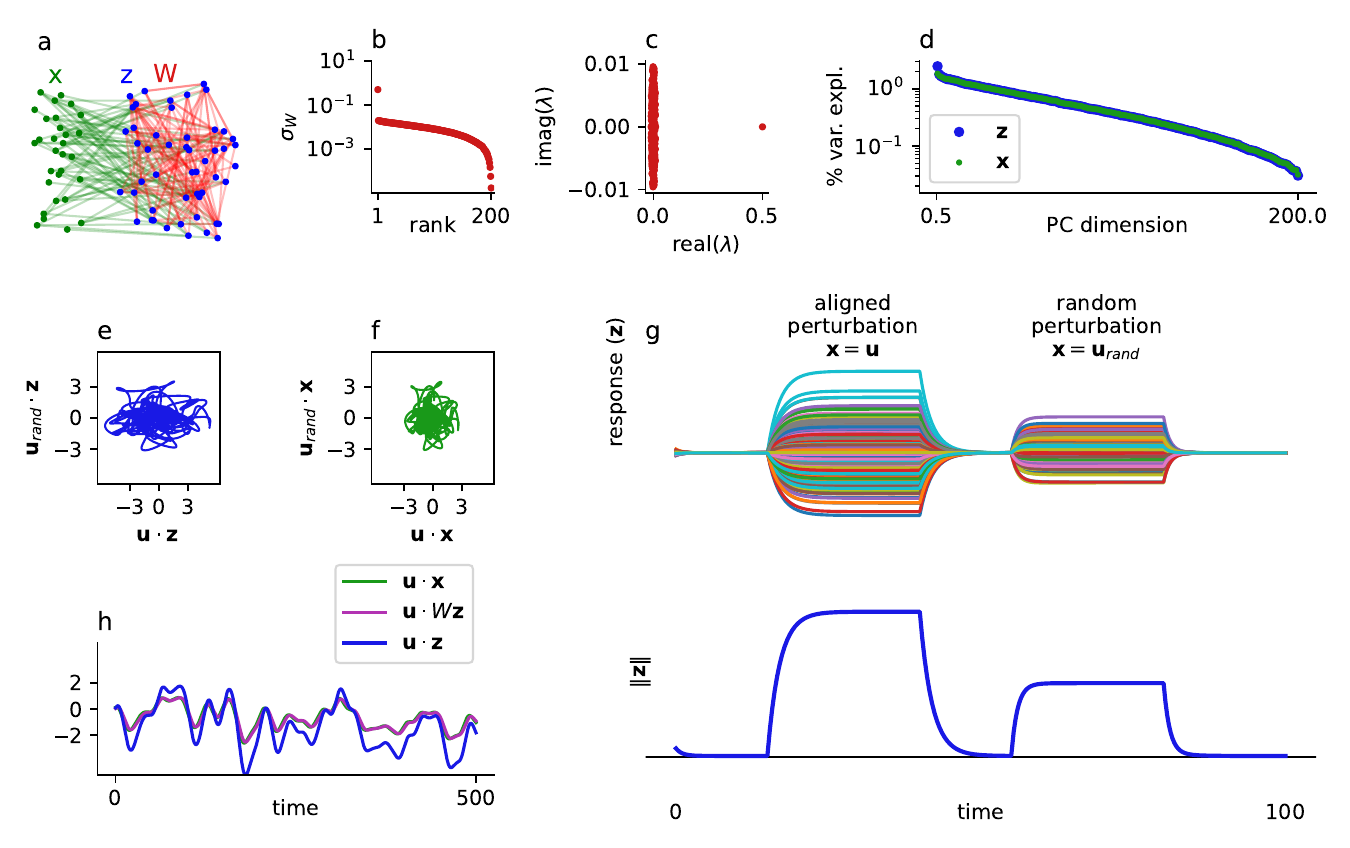}
 }
 \caption{{\bf Response properties of a weakly low-rank  network with $\rho\ll 1$ and $c>0$.} Same as Figure~\ref{F1} except $c=0.5$ and $\rho=0.01$. Note that stability requires that $c<1$ when $c>0$.}
\label{SuppFigWeak2}
 \end{figure*}

 \begin{figure*}[h]
 \centering{
 \includegraphics[width=2.5in]{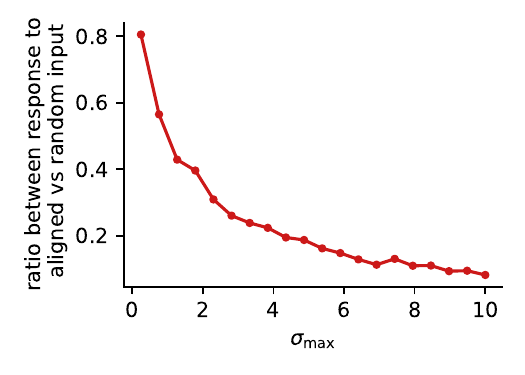}
 }
 \caption{{{\bf Quantifying low-rank suppression as the maximum singular value is scaled.} We repeated the simulation from Figure~\ref{F1} while changing the value of $c<0$ to change the value of the maximum singular value of $W$, $\sigma_\textrm{max}=|c|$. This demonstrates a smooth transition between weakly and strongly low-rank structure. For each value of $c$, we computed the ratio of $\|\vvec z\|$ in response to an aligned input versus a random input, \textit{i.e.}, the height of the two bumps in Figure~\ref{F1}g as a measure of low-rank suppression. Low-rank suppression occurs when this ratio is small. }}
\label{SuppFigscalec}
 \end{figure*}

 \begin{figure*}[h]
 \centering{
 \includegraphics[width=6.5in]{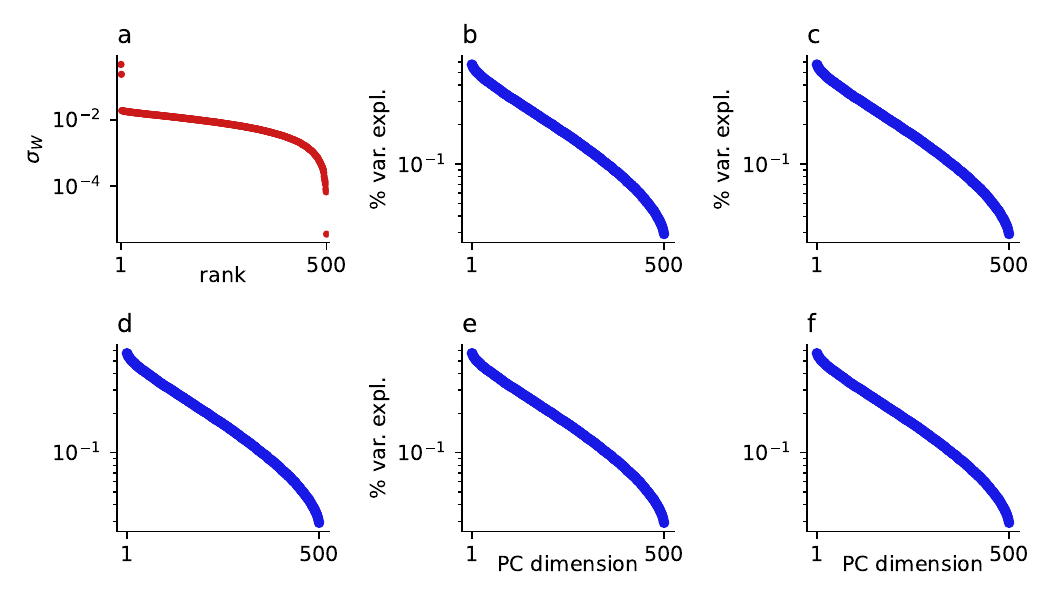}
 }
 \caption{{{\bf High-dimensional dynamics in weakly low-rank networks.} {\bf a)} Singular values for a weakly low-rank network in which $\rho=0.01$, $\sigma_1=0.5$, and $\sigma_2=0.25$. 
 {\bf b-f)} Same as Figure~\ref{F3}c,f,i,l,o except that $\rho=0.01$, , $\sigma_1=0.5$, and $\sigma_2=0.25$. The network response is high-dimensional in every case.}}
\label{SuppFigWeak3}
 \end{figure*}

 \begin{figure*}
 \centering{
 \includegraphics[width=5.5in]{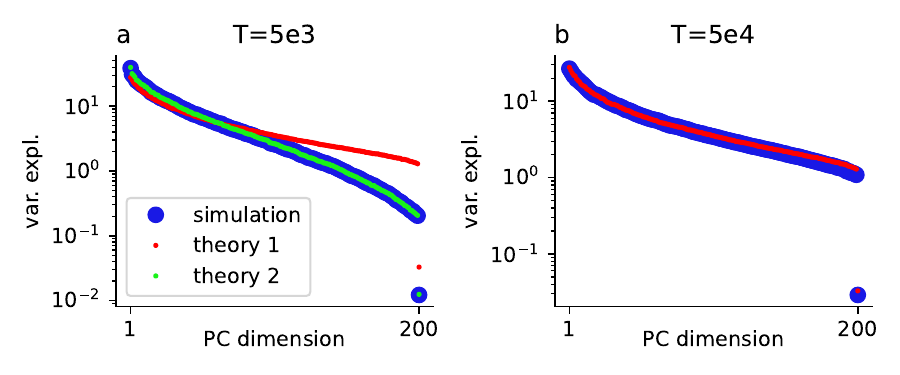}
 }
 \caption{{\bf Comparison between theory and simulations for shorter and longer simulations.} {\bf a)} Same as Figure~\ref{F1}d except we added the theoretical values form Eq.~\eqref{EkPC} (red dots) and the values obtained  from Eq.~\eqref{EkR} (green dots). The network was simulated for $T=5\times 10^3$ time units. {\bf b)} Same as b except we increased the simulation time to $T=5\times 10^4$.   }
\label{SuppFigTheory}
 \end{figure*}
 
 \clearpage


%

\end{document}